\documentclass[11pt,a4paper]{amsart}
\usepackage[a4paper, margin=3.06cm]{geometry}

\title{Tracially $\mathcal{Z}$-absorbing $C^*$-algebras}
\author{Ilan Hirshberg}
\thanks{This research was supported in part by Israel Science Foundation grant 1471/07}
\email{ilan@math.bgu.ac.il}
\address{Department of Mathematics, Ben Gurion University of the Negev, P.O.B. 653, Be'er Sheva 84105, Israel}
\author{Joav Orovitz}
\email{joav@post.bgu.ac.il}
\address{Department of Mathematics, Ben Gurion University of the Negev, P.O.B. 653, Be'er Sheva 84105, Israel}

\usepackage{amssymb}
\usepackage{amsmath}
\usepackage{amsthm}
\usepackage[retainorgcmds]{IEEEtrantools}

\theoremstyle{plain}

\newtheorem{Thm}{Theorem}[section]
\newtheorem{Cor}[Thm]{Corollary}
\newtheorem{Lemma}[Thm]{Lemma}

\newtheorem{Prop}[Thm]{Proposition}

\theoremstyle{definition}
\newtheorem{Def}[Thm]{Definition}
\newtheorem{Notation}[Thm]{Notation}

\newtheorem{Exl}[Thm]{Example}
\newtheorem{Rmk}[Thm]{Remark}

\newcommand{\B}{\mathcal{B}}
\newcommand{\A}{\mathcal{A}}

\newcommand{\Zh}{\mathcal{Z}}

\newcommand{\Rr}{\mathcal{R}}

\newcommand{\N}{{\mathbb N}}
\newcommand{\Z}{{\mathbb Z}}
\newcommand{\C}{{\mathbb C}}

\newcommand{\lb}{\left <}
\newcommand{\rb}{\right >}

\newcommand{\aut}{\mathrm{Aut}}
\newcommand{\ad}{\mathrm{Ad}}

\newcommand{\lnorm}{\left \|}
\newcommand{\rnorm}{\right \|}

\newcommand{\eps}{\varepsilon}

\newcommand{\TZA}{tracially $\Zh$-absorbing}
\newcommand{\Csalg}{$C^*$-algebra}

\begin{document}
\begin{abstract}
We study a tracial notion of $\Zh$-absorption for simple, unital \Csalg s. We show that if $\A$ is a \Csalg~  for which this property holds then $\A$ has almost unperforated Cuntz semigroup, and if in addition $\A$ is nuclear and separable we show this property is equivalent to having $\A\cong\A\otimes\Zh$. We furthermore show that this property is preserved under forming certain crossed products by actions satisfying a tracial Rokhlin type property.
\end{abstract}
\maketitle

\section{Introduction}
The purpose of this paper is to introduce and study a property of $C^*$-algebras which can be thought of as a tracial version of $\Zh$-absorption, in a way reminiscent of the definition of tracially AF algebras and $C^*$-algebras of higher tracial rank (see \cite{lin}). There are several motivations for looking at this property. A property closely related to ours (the two coincide in certain cases) appeared as a technical step in Winter's work concerning $\Zh$-absorption for $C^*$-algebras of finite nuclear dimension (\cite{winter-dr-Z,winter3}) and the recent work of Matui-Sato on strict comparison and $\Zh$-absorption (\cite{matui-sato}), and it may thus be profitable to isolate this property for further study. Additionally, this property may be easier to establish in certain instances, in particular when considering crossed products by an action which satisfies a tracial version of the Rokhlin property. Indeed, we study a generalization of the tracial Rokhlin property which does not require projections and show that under certain conditions, crossed products of tracially $\Zh$-absorbing $C^*$-algebras by such actions are again tracially $\Zh$-absorbing. The Rokhlin property has been instrumental in the study of group actions on $C^*$-algebras (see \cite{Iz0} for a survey, \cite{Iz,Iz2,osaka-phillips} for the finite group case, \cite{Ks1} and references therein for the single automorphism case). However, the Rokhlin property might be harder to establish in certain cases, and might not exist in others. In the finite group case, the Rokhlin property is uncommon and its existence requires restrictive $K$-theoretic constraints on the algebra and the action. While for the single automorphism case the Rokhlin property is much more common (and even generic in certain cases, see e.g. \cite{hwz}), it still requires the existence of many projections. The Rokhlin property has been generalized to a tracial version in \cite{OP-tracial, phillips}, although this generalization still requires the existence of projections. Further tracial-type generalizations in which the projections are replaced by positive elements were considered in \cite{sato,Archey}, and we will study a closely related variant. We note that a different generalization which does not require projections has been to view the Rokhlin property as a zero-dimensional level of what can be called \emph{Rokhlin dimension}, see \cite{hwz}.

A unital \Csalg~ $\A$ is $\Zh$-absorbing if and only if for any $n\in\N$, any finite subset $F\subseteq\A$ and for any $\eps>0$ there exist c.p.c. order zero maps $\varphi:M_n\to \A$ and $\psi:M_2:\to \A$ such that the image of any normalized matrix under $\varphi$ or $\psi$ commutes up to $\eps$ with the elements of $F$, and such that $\psi(e_{1,1})\varphi(e_{1,1})= \psi(e_{1,1})$ and $\psi(e_{2,2}) = 1_\A - \varphi(1)$ (see \cite{rordam-winter}). In this characterization, $1-\varphi(1)$ must be ``small'' in a tracial sense. In this paper we introduce a property called \emph{tracial $\Zh$-absorption} which basically amounts to dropping the requirement for an order zero map from $M_2$ into $\A$ and instead asking that $1-\varphi(1)$ is arbitrarily small in the sense of Cuntz comparison (see Definition \ref{def:TZA}). 
We show that \TZA~ \Csalg s have almost unperforated Cuntz semigroups. This generalizes  results of R{\o}rdam in \cite{rordam} concerning $\Zh$-absorption. We then use ideas from \cite{matui-sato} to show that tracial $\Zh$-absorption implies $\Zh$ absorption for simple, unital, separable, nuclear \Csalg s. 

The paper is organized as follows. In section \ref{section:basic-properties} we define tracial $\Zh$-absorption, establish notation, and prove some basic properties. Section \ref{section:almost-unperforation} is devoted to showing that \TZA~ \Csalg s have almost unperforated Cuntz semigroup (and thus strict comparison). In section \ref{section:nuclear} we use this together with the techniques of Matui and Sato to show that a simple, unital, separable, nuclear \TZA~ $C^*$-algebra is in fact $\Zh$-absorbing. In section \ref{section:finite-group} we define a projectionless version of the tracial Rokhlin property for finite group actions and we show that tracial $\Zh$-absorption passes to crossed products under such actions. In section \ref{section:single-auto} we introduce a similar Rokhlin type property for automorphisms (i.e. integer actions), closely related to the one considered by Sato in \cite{sato}, and prove that tracial $\Zh$-absorption is preserved by crossed products by such automorphisms, assuming that some power of the automorphism acts trivially on the tracial state space. This technical assumption holds automatically in many cases (e.g. if some power of the automorphism is approximately inner or if the algebra has finitely many extreme traces). 

We thank Wilhelm Winter for some helpful conversations regarding this paper.

\section{Basic properties of tracially $\Zh$-absorbing algebras}
\label{section:basic-properties}

Recall that a c.p.c. map $\varphi:\A \to \B$ is said to have \emph{order zero} if $\varphi(a) \varphi(b)=0$ whenever $a, b \in \A_+$ satisfy $ab=0$. For positive elements $a,b \in \A$ we say that $a$ is \emph{Cuntz-subequivalent} to $b$ (written $a\precsim b$) if there is a sequence $x_n\in \A$ such that $\lim_{n\to\infty}\|a-x_nbx_n^*\| = 0$.
\begin{Def}
\label{def:TZA}
We say that a unital \Csalg~ $\A$ is \emph{\TZA} if $\A \ncong \C$ and for any finite set $F \subset \A$, $\eps>0$ and non-zero positive element $a \in \A$ and $n \in \N$ there is an order zero contraction $\psi:M_n \to \A$ such that the following hold:
\begin{enumerate}
\item $1-\varphi(1) \precsim a$. 
\item For any normalized element $x \in M_n$ and any $y \in F$ we have $\|[\varphi(x),y]\| < \eps$.
\end{enumerate}
\end{Def}

\begin{Prop}
Let $\A$ be a simple unital \Csalg. If $\A$ is $\Zh$-absorbing then $\A$ is \TZA.
\end{Prop}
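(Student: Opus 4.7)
The plan is to combine the R{\o}rdam--Winter characterization of $\Zh$-absorption recalled in the introduction with the fact, due to R{\o}rdam \cite{rordam}, that every $\Zh$-absorbing $C^*$-algebra has almost unperforated Cuntz semigroup. Fix $F \subset \A$ finite, $\eps > 0$, a nonzero positive element $a \in \A$, and $n \in \N$. The case $n = 1$ is trivial, handled by $\varphi : \C \to \A$, $\lambda \mapsto \lambda \cdot 1_\A$, so assume $n \ge 2$.

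Since $\A$ is simple and unital, there exist $m \in \N$ and $s_1, \dots, s_m \in \A$ with $\sum_{i=1}^m s_i a s_i^* = 1_\A$, which gives $[1_\A] \le m[a]$ in the Cuntz semigroup. I would then apply the R{\o}rdam--Winter characterization with $nm$ in place of $n$ and the same $F$, $\eps$, obtaining c.p.c.\ order zero maps $\varphi' : M_{nm} \to \A$ and $\psi' : M_2 \to \A$ almost commuting with $F$ and satisfying $\psi'(e_{22}) = 1_\A - \varphi'(1)$ and $\psi'(e_{11})\varphi'(e_{11}) = \psi'(e_{11})$. Define $\varphi : M_n \to \A$ by $\varphi(x) = \varphi'(x \otimes 1_{M_m})$ via the unital embedding $M_n \hookrightarrow M_n \otimes M_m = M_{nm}$. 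Then $\varphi$ is c.p.c.\ order zero (composition of a unital $*$-homomorphism with a c.p.c.\ order zero map), $\varphi(1) = \varphi'(1)$, and the approximate commutation with $F$ is inherited since $x \otimes 1_{M_m}$ is normalized whenever $x$ is.

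For the Cuntz comparison, Murray--von Neumann equivalence of $e_{11}$ and $e_{22}$ in $M_2$ together with the Winter--Zacharias structure theorem for order zero maps gives $\psi'(e_{11}) \sim \psi'(e_{22})$. The relations $\psi'(e_{11}) \varphi'(e_{11}) = \psi'(e_{11}) = \varphi'(e_{11}) \psi'(e_{11})$ rearrange to $\psi'(e_{11}) = \varphi'(e_{11}) \psi'(e_{11}) \varphi'(e_{11})$, placing $\psi'(e_{11})$ in the hereditary subalgebra generated by $\varphi'(e_{11})$, whence $\psi'(e_{11}) \precsim \varphi'(e_{11})$. Since the elements $\varphi'(e_{ii})$ for $i = 1, \dots, nm$ are pairwise orthogonal and pairwise Cuntz equivalent with sum $\varphi'(1)$, we obtain $nm \cdot [\varphi'(e_{11})] = [\varphi'(1)] \le [1_\A] \le m[a]$, and therefore
\[ nm \cdot [1-\varphi(1)] \le m[a]. \]

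The main obstacle is extracting the sharp inequality $[1-\varphi(1)] \le [a]$ from this cruder one, and this is precisely where almost unperforation enters. Because $n \ge 2$ gives $nm \ge m+1$, the estimate rewrites as $(m+1)[1-\varphi(1)] \le m[a]$, and almost unperforation of the Cuntz semigroup of $\A$ then yields $1 - \varphi(1) \precsim a$, completing the verification of Definition \ref{def:TZA}.
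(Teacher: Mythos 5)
Your argument is correct, and it takes a genuinely different route from the paper's proof. The paper splits into two cases via R{\o}rdam's dichotomy: if $\A$ is stably finite it invokes strict comparison, chooses $m>\ell$ (where $\sum_k c_k^*ac_k=1$) divisible by $n$, uses an approximately central unital embedding of the dimension drop algebra $\Zh_{m,m+1}$ together with the R{\o}rdam--Winter order zero map $\tilde\varphi:M_m\to\Zh_{m,m+1}$ satisfying $1-\tilde\varphi(1)\precsim\tilde\varphi(e_{1,1})$, and then compares via the estimate $d_\tau(1-\varphi(1))\le 1/m\le d_\tau(a)$; if $\A$ is not stably finite it is purely infinite and the comparison is automatic. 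You instead work uniformly, with no case distinction and no traces: you use the order-zero-map characterization of $\Zh$-absorption quoted in the introduction, extract $1-\varphi'(1)=\psi'(e_{22})\sim\psi'(e_{11})\precsim\varphi'(e_{11})$ directly from the relations $\psi'(e_{22})=1-\varphi'(1)$ and $\psi'(e_{11})\varphi'(e_{11})=\psi'(e_{11})$ (your hereditary-subalgebra argument and the equivalence of the $\psi'(e_{ii})$ are both sound), and then close the comparison inside $W(\A)$ using R{\o}rdam's almost unperforation of the Cuntz semigroup of $\Zh$-absorbing algebras, via $(m+1)[1-\varphi(1)]\le nm[1-\varphi(1)]\le m[a]$. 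Both proofs rest on the same two sources (\cite{rordam} and \cite{rordam-winter}) but on different statements from them; your version buys a purely Cuntz-semigroup-theoretic argument that avoids the stably finite/purely infinite dichotomy and any tracial estimates, at the cost of importing the (stronger-sounding but equally available) almost unperforation theorem, while the paper's version stays closer to the tracial intuition ``$1-\varphi(1)$ is small in trace'' and to the dimension drop picture of $\Zh$. Your separate treatment of $n=1$ is necessary for your inequality $nm\ge m+1$ and is handled correctly; the only omission, shared with the paper, is the trivial remark that a $\Zh$-absorbing unital algebra is not $\C$, as required by Definition \ref{def:TZA}.
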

\begin{proof}
First assume $\A$ is stably finite. By \cite{rordam} we know that $\A$ has strict comparison. Let $a, \eps, n, F$ be given as in Definition \ref{def:TZA}. Since $\A$ is simple there exist $\ell\in \N$ and $c_1,\ldots,c_\ell\in\A$ such that $$\sum_{k=1}^\ell c_k^*ac_k = 1.$$ This clearly implies that $d_\tau(a)\geq 1/\ell$ for all $\tau\in T(\A)$. Let $m$ be some number such that $m > \ell$ and $n$ divides $m$. Since $\A$ is $\Zh$-absorbing there is a unital homomorphism $\psi:\Zh_{m,m+1}\to \A$ such that for any normalized element $x\in\Zh_{m,m+1}$ and for any $y\in F$ we have $$\|[\psi(x),y]\|<\eps.$$  By \cite{rordam-winter} we have a c.p.c. order zero map $\tilde\varphi:M_m\to x\in\Zh_{m,m+1}$ such that $$1-\tilde\varphi(1) \precsim \tilde\varphi(e_{1,1}).$$ Let $\varphi = \psi\circ\tilde\varphi:M_m\to \A$. For any $\tau\in T(\A)$ we have that
$$d_\tau(1-\varphi(1)) \leq d_\tau(\varphi(e_{1,1})) \leq 1/m \leq d_\tau(a)$$
which entails that $1-\varphi(1)\precsim a$. Since $M_n$ embeds unitally into $M_m$ we restrict $\varphi$ to obtain the map we are looking for.

If $\A$ is not stably finite then, by \cite{rordam}, $\A$ is purely infinite. Set $m=n$ and continue as before. The condition $1-\varphi(1)\precsim a$ is automatically satisfied.
\end{proof}

In what follows we identify $M_n(\A)$ with $M_n \otimes \A$ in the usual way.

\begin{Lemma}
Let $\A$ be a simple, unital, non type $I$ \Csalg~ and let $n \in \N$. For every non-zero positive element $a\in M_n(\A)$ there exists a non-zero positive element $b\in \A$ such that $a \succsim 1\otimes b$.
\end{Lemma}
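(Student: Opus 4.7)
First I would reduce the statement to a one-dimensional corner problem in $\A$ itself. Since $a \in M_n(\A)_+$ is nonzero, at least one diagonal entry $a_{ii}$ must be nonzero; compressing by the projection $e_{ii} \otimes 1_{\A}$ gives $e_{ii} \otimes a_{ii}$, which is Cuntz subequivalent to $a$ in $M_n(\A)$. Setting $c := a_{ii}$, it therefore suffices to find a nonzero $b \in \A_+$ such that $1 \otimes b \precsim e_{ii} \otimes c$ in $M_n(\A)$.

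The plan is then to invoke the standard fact that a simple non-type-$I$ $C^*$-algebra has ``enough room to split'': every nonzero hereditary subalgebra contains $n$ pairwise orthogonal, pairwise Cuntz equivalent, nonzero positive elements. The hereditary subalgebra $\overline{c \A c} \subseteq \A$ is itself simple (hereditary subalgebras of simple $C^*$-algebras are simple via the standard bijection between ideals) and non-type-$I$ (type-$I$-ness passes to hereditary subalgebras), so I can apply this fact inside $\overline{c\A c}$ to produce $b_1, \ldots, b_n \in \overline{c\A c}_+$, all nonzero, pairwise orthogonal, and all Cuntz equivalent in $\A$ to a common element $b := b_1$.

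With this in hand the reassembly is a routine Cuntz-comparison manipulation. Using the element $y := \sum_{k=1}^n e_{k1} \otimes b_k^{1/2} \in M_n(\A)$, the pairwise orthogonality of the $b_k$ gives
\[
y y^* = \mathrm{diag}(b_1, \ldots, b_n), \qquad y^* y = e_{11} \otimes (b_1 + b_2 + \cdots + b_n),
\]
so these two positive elements are Cuntz equivalent in $M_n(\A)$. Since each $b_k$ is Cuntz equivalent to $b$, also $1 \otimes b = \mathrm{diag}(b,\ldots,b) \sim \mathrm{diag}(b_1,\ldots,b_n)$, while $b_1+\cdots+b_n \in \overline{c\A c}$ yields $e_{11} \otimes (b_1+\cdots+b_n) \precsim e_{11}\otimes c \sim e_{ii}\otimes c \precsim a$, the middle equivalence being implemented by the obvious permutation unitary in $M_n(\A)$. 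Chaining these (sub)equivalences gives $1\otimes b \precsim a$.

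The main obstacle is the splitting step: producing $n$ pairwise orthogonal, pairwise Cuntz equivalent, nonzero positive elements inside a hereditary subalgebra of a simple non-type-$I$ algebra. Existence of a single orthogonal pair uses non-type-$I$ (via Glimm-style halving, which yields a subalgebra admitting more than one character), and simplicity is then needed to trim the resulting two hereditary subalgebras until they contain Cuntz equivalent nonzero positive elements; iterating (or a binary halving argument) then produces arbitrary $n$. This is a well-trodden technical lemma in the classification program, and I would prefer to cite it rather than reprove it.
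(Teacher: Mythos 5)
Your proposal is correct and follows essentially the same route as the paper: the paper also compresses $a$ to a corner $e_{11}\otimes c$, obtains the ``splitting'' from a non-zero homomorphism $\theta:CM_n\to\overline{c\A c}$ (Proposition 4.10 of Kirchberg--R{\o}rdam, i.e.\ Glimm's lemma), whose elements $\theta(z\otimes e_{kk})$ are exactly your pairwise orthogonal, pairwise equivalent $b_k$, and then makes the same reassembly $1\otimes b\sim e_{11}\otimes\theta(z\otimes 1)\precsim e_{11}\otimes c\precsim a$. One small remark: non-type-I-ness of $\overline{c\A c}$ does not follow merely from ``type-I-ness passes to hereditary subalgebras''; you need simplicity here (the paper deduces it from Brown's stable isomorphism theorem, or one can note that a simple type I hereditary subalgebra would force $\A$ to be elementary), but this is easily repaired and does not affect the argument.
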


\begin{proof}
Since $a$ is positive and non-zero, there exists $i$ such that $(e_{ii}\otimes 1_\A) a (e_{ii}\otimes 1_\A)\neq 0$. Assume without loss of generality that $i=1$. Furthermore, by replacing $a$ with the element $(e_{1,1}\otimes 1_\A) a (e_{1,1}\otimes 1_\A)$ we may assume that $a$ is of the form $e_{1,1} \otimes c$ for some non-zero positive $c\in \A$.

Note that both $\A$ and $c\A c$ have a strictly positive element and hence, by Brown's Theorem, we have that $\overline{c\A c}$ is stably isomorphic to $\A$, this implies that the former is a simple, infinite dimensional, \Csalg~ not isomorphic to the compact operators. In particular $\overline{c\A c}$ is non-type $I$. Therefore, by Proposition 4.10 of \cite{kirchberg-rordam} (which is a corollary of Glimm's Theorem) there is a non-zero homomorphism $\theta:CM_n \to \overline{c\A c}$. Let $z\in C_0((0,1])$ denote the identity function. Using the picture $CM_n = C_0((0,1]) \otimes M_n$ we denote $h=\theta (z\otimes 1)$ and $b = \theta(z\otimes e_{1,1})$. Observe that $e_{1,1} \otimes h \sim 1\otimes b$ in $M_n(\A)$. Consequently we have that $a \succsim 1\otimes b$ as needed. 
\end{proof}

\begin{Lemma}\label{lemma:matrixtza}
Let $\A$ be a simple unital \Csalg. If $\A$ is \TZA~ then so is $M_n(\A)$ for any $n$.
\end{Lemma}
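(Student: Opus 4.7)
The plan is to diagonally amplify a c.p.c. order zero map obtained from tracial $\Zh$-absorption of $\A$, using the preceding lemma to reduce Cuntz comparison against an arbitrary positive element of $M_n(\A)$ to comparison against a diagonal element of the form $1_{M_n}\otimes b$.

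Given data $F \subset M_n(\A)$, $\eps > 0$, a nonzero positive $a \in M_n(\A)$, and $m \in \N$ as in Definition \ref{def:TZA}, I would proceed as follows. Write each $y \in F$ in its matrix form $y = \sum_{p,q=1}^n e_{pq} \otimes y_{pq}$ with $y_{pq} \in \A$, and let $F' = \{y_{pq} : y \in F,\ 1 \leq p,q \leq n\} \subset \A$. By the preceding lemma, pick a nonzero positive $b \in \A$ with $1_{M_n} \otimes b \precsim a$ in $M_n(\A)$. Apply tracial $\Zh$-absorption of $\A$ to the data $(F', \eps/(2n), b, m)$ to obtain a c.p.c. order zero map $\tilde\varphi : M_m \to \A$ with $1_\A - \tilde\varphi(1) \precsim b$ and $\|[\tilde\varphi(x), c]\| < \eps/(2n)$ for every normalized $x \in M_m$ and every $c \in F'$. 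Define $\varphi : M_m \to M_n(\A)$ by $\varphi(x) = 1_{M_n} \otimes \tilde\varphi(x)$. This factors as the unital $*$-homomorphism $M_m \hookrightarrow M_n \otimes M_m$ followed by $\mathrm{id}_{M_n} \otimes \tilde\varphi$; the latter is c.p.c.\ order zero (for instance, by tensoring with $\mathrm{id}_{M_n}$ the Winter--Zacharias supporting homomorphism $C_0((0,1]) \otimes M_m \to \A$ associated with $\tilde\varphi$), and so $\varphi$ is c.p.c.\ order zero.

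For condition (1) of Definition \ref{def:TZA}, note that $1_{M_n(\A)} - \varphi(1) = 1_{M_n} \otimes (1_\A - \tilde\varphi(1))$; amplifying the sequence witnessing $1_\A - \tilde\varphi(1) \precsim b$ via $1_{M_n} \otimes (\cdot)$ yields $1_{M_n(\A)} - \varphi(1) \precsim 1_{M_n} \otimes b \precsim a$. For condition (2), a direct computation gives $[\varphi(x), y] = \sum_{p,q} e_{pq} \otimes [\tilde\varphi(x), y_{pq}]$ for any $y \in F$, and the standard bound $\|\sum_{p,q} e_{pq} \otimes c_{pq}\|_{M_n(\A)} \leq n \max_{p,q} \|c_{pq}\|$ yields $\|[\varphi(x), y]\| \leq n \cdot \eps/(2n) = \eps/2 < \eps$. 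I do not anticipate a substantive obstacle, as the argument is routine once the preceding lemma is established; the only detail requiring care is the bookkeeping of the factor $n$ in the commutator estimate and the verification that amplifying a $*$-homomorphism by $1_{M_n}\otimes(\cdot)$ preserves both the order zero structure and the Cuntz subequivalence.
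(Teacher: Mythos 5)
Your proposal is correct and takes essentially the same route as the paper: reduce to positive elements of the form $1_{M_n}\otimes b$ via the preceding lemma, then obtain the required map by amplifying a map from $\A$ of the form $x\mapsto 1_{M_n}\otimes\tilde\varphi(x)$ against the matrix entries of $F$. The paper leaves this amplification step as ``clear,'' whereas you write out the order zero, Cuntz-subequivalence, and commutator estimates explicitly; these details are all accurate.
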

\begin{proof}

It is clear that $M_n(\A) \cong M_n \otimes \A$ satisfies the conditions of Definition \ref{def:TZA} provided the positive element $a$ is of the form $1 \otimes b$, $b \in \A$. Thus, it suffices to show that every non-zero positive element $a \in M_n(\A)$ Cuntz-dominates a non-zero positive element of the form $1 \otimes b$. But this follows from the previous lemma.
\end{proof}

\begin{Notation}\label{notation:centralsequencealgebra}
Let $\A$ be a separable \Csalg. We denote
$$\A_\infty = \prod_\N \A / \bigoplus_\N \A$$
We view $\A$ as embedded into $\A_\infty$ as equivalence classes of constant sequences and we denote by $$\A_\infty \cap \A^\prime$$ the relative commutant of $\A$ in $\A_\infty$.
\end{Notation}

The following result can help simplify proofs.
\begin{Lemma}\label{lemma:centralsequencealgebra}
Let $\A$ be a separable, unital \Csalg. If $\A$ is \TZA~ then for any $n\in \N$ and any non-zero positive contraction $a\in \A$ there exists a c.p.c. order zero map $\varphi:M_n\to \A_\infty \cap \A^\prime$ such that $1_{\A_\infty}-\varphi(1)\precsim a$ in $\A_\infty$.
\end{Lemma}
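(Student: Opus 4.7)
The plan is a standard diagonal sequence argument. Since $\A$ is separable, fix a countable dense subset $\{y_m\}_{m\in\N}$ of $\A$. For each $k \in \N$, apply Definition \ref{def:TZA} to the finite set $F_k := \{y_1,\ldots,y_k\}$, tolerance $1/k$, the positive contraction $a$, and the integer $n$; this produces c.p.c.\ order zero maps $\psi_k : M_n \to \A$ with (i) $1 - \psi_k(1) \precsim a$ in $\A$, and (ii) $\|[\psi_k(x), y]\| < 1/k$ for every normalized $x \in M_n$ and every $y \in F_k$. I then define $\varphi : M_n \to \A_\infty$ by $\varphi(x) := [(\psi_k(x))_k]$; since the properties of being c.p.c.\ and order zero pass to products and quotients, $\varphi$ is c.p.c.\ and order zero.

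To verify $\varphi(M_n) \subseteq \A_\infty \cap \A'$, fix $y \in \A$ and $\eta > 0$; choose $y_{k_0}$ from the dense sequence with $\|y - y_{k_0}\| < \eta$. For $k \geq k_0$ we have $y_{k_0} \in F_k$, so for any normalized $x \in M_n$,
\[
\|[\psi_k(x), y]\| \leq \|[\psi_k(x), y_{k_0}]\| + 2\|x\|\eta < 1/k + 2\|x\|\eta.
\]
Letting first $k\to\infty$ and then $\eta \to 0$ forces $[\varphi(x), y] = 0$ in $\A_\infty$.

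The final step, and the main obstacle, is to verify the Cuntz subequivalence $1_{\A_\infty} - \varphi(1) = [(1 - \psi_k(1))_k] \precsim a$ in $\A_\infty$. Each factor satisfies $1 - \psi_k(1) \precsim a$ in $\A$, but the R{\o}rdam witnesses $r_k$ given by $(1 - \psi_k(1) - \eps)_+ = r_k(a - \delta_k)_+ r_k^*$ need not form a bounded sequence, and so do not directly define an element of $\A_\infty$; moreover the cut-offs $\delta_k$ need not admit a uniform lower bound. The plan is to replace $r_k$ by the contraction $s_k := r_k (a - \delta_k)_+^{1/2}$, which satisfies $s_k s_k^* = (1 - \psi_k(1) - \eps)_+$ and hence $\|s_k\| \leq 1$, and then re-express the subequivalence via spectral functional calculus on $a$ using bounded witnesses of the form $y_k := s_k h(a)^{1/2}$ for a suitable continuous function $h$ with $t\,h(t)$ approximating $1$ on the relevant part of $\mathrm{sp}(a)$. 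The resulting sequence $(y_k)$ is uniformly bounded and defines an element $Y \in \A_\infty$ with $Y a Y^*$ approximately equal to $(1_{\A_\infty} - \varphi(1) - \eps)_+$; letting $\eps \to 0$ yields the conclusion.
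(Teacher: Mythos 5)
Your construction of $\varphi$ and the verification of centrality are exactly the paper's argument (the paper's proof is the same diagonal sequence and then simply asserts the subequivalence), so the only point at issue is the step you yourself flag as the main obstacle -- and your plan for it does not work. Write $s_k=r_k(a-\delta_k)_+^{1/2}$ as you propose and set $y_k=s_kh(a)^{1/2}$; then $y_kay_k^*-s_ks_k^*=s_k\bigl(ah(a)-1\bigr)s_k^*=r_k\bigl[(a-\delta_k)_+^{1/2}\bigl(ah(a)-1\bigr)(a-\delta_k)_+^{1/2}\bigr]r_k^*$. Since $h$ is bounded, $th(t)-1$ has modulus close to $1$ near $t=0$, and you have no lower bound on $\delta_k$; so if $h$ is a fixed function the bracket need not be small, and estimating it by its sup norm reintroduces the factor $\|r_k\|^2$ you were trying to eliminate, while estimating by $\|s_k\|^2\|ah(a)-1\|$ gives an error of size at least $1$. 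Letting the cut-off of $h$ depend on $k$ (below $\delta_k$) forces $\|h(a)^{1/2}\|\approx\delta_k^{-1/2}$, so $(y_k)$ is again unbounded. The failure is not cosmetic: no argument using only the termwise data ``$1-\psi_k(1)\precsim a$ with everything contractive'' can succeed, because that implication is false in general. For example, in $\A=C([0,1])$ with $a(t)=t$ and $b_k$ a norm-one bump supported in $[\tfrac{1}{2k},\tfrac{1}{k}]$ one has $b_k\precsim a$ for every $k$, yet $(b_k)_k\not\precsim a$ in $\A_\infty$: any $z_k$ with $\|z_kaz_k^*-b_k\|<\tfrac12$ satisfies $\|z_k\|^2\geq k/2$ at a point where $b_k=1$.

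The missing idea is to exploit the freedom in Definition \ref{def:TZA} of which positive element you feed in, so as to build in uniform ``room''. Fix $\eta\in(0,\|a\|)$, so that $(a-\eta)_+\neq 0$, and choose the maps $\psi_k$ (same finite sets $F_k$ and tolerances $1/k$) with $1-\psi_k(1)\precsim(a-\eta)_+$. Given $\eps>0$, your R{\o}rdam step yields $\delta_k>0$ and $r_k$ with $(1-\psi_k(1)-\eps)_+=r_k\bigl((a-\eta)_+-\delta_k\bigr)_+r_k^*$; put $s_k=r_k\bigl((a-\eta)_+-\delta_k\bigr)_+^{1/2}$, a contraction, and $y_k=s_kq(a)$, where $g(t)=\min(1,t/\eta)$ and $q(t)=g(t)t^{-1/2}$ for $t>0$, $q(0)=0$. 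Then $\|q(a)\|\leq\eta^{-1/2}$, $s_kg(a)=s_k$ (the function $((t-\eta)_+-\delta_k)_+^{1/2}$ vanishes where $g<1$), and $q(a)aq(a)=g(a)^2$, so $y_kay_k^*=s_kg(a)^2s_k^*=s_ks_k^*=(1-\psi_k(1)-\eps)_+$ with $\|y_k\|\leq\eta^{-1/2}$ uniformly in $k$ and $\eps$. Hence $Y=(y_k)_k$ defines an element of $\A_\infty$ with $YaY^*=(1_{\A_\infty}-\varphi(1)-\eps)_+$, and letting $\eps\to 0$ gives $1_{\A_\infty}-\varphi(1)\precsim a$ in $\A_\infty$. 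This is the content hidden in the paper's ``it is easy to see'': the uniform bound comes from strengthening the termwise subequivalence to the slightly smaller element $(a-\eta)_+$, not from manipulating witnesses for $a$ itself.
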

\begin{proof}
Let $(b_k)_{k\in\N}\subseteq \A$ be a dense sequence and denote $F_k=\{b_1,\ldots,b_k\}$. For each $k\in \N$ find a c.p.c. order zero map $\psi_k:M_n \to \A$ such that $1-\psi_k(1)\precsim a$ and $\|[\psi(x), y]\|<\frac{1}{k}$ for all normalized $x\in M_n$ and for all $y\in F_k$. Let $\varphi$ be the composition of the map $$(\psi_k)_{k\in \N}: M_n \to \prod_\N \A$$ with the quotient map into $\A_\infty$. It is easy to see that $\varphi$ has the desired properties.
\end{proof}

The following is a restatement of Corollary 4.2 of \cite{winter-zacharias2} along with some additional notation:

\begin{Notation}
Let $\varphi:F \to \A$ be an order zero c.p.c. map, where $F$ is finite dimensional. Denote $h=\varphi(1)$. Recall that there is a homomorphism $\pi:F \to \A^{**}\cap \{h\}^\prime$ such that $\varphi(x) = \pi(x)h$ where $h=\varphi(1)$.

If $f \in C_0((0,1])_+$, we denote by 
$$f[\varphi]$$ the c.p. order zero map given by 
$$f[\varphi](x) = \pi(x) f(h).$$ 
If $f$ is the square root function, we may use the notation $\sqrt{[\varphi]}$ for $f[\varphi]$.

For a c.p.c. map $\varphi:F \to \A$ from a finite dimensional algebra $F$, and an element $b \in \A$, we write  $$\|[\varphi,b]\| < \eps$$ to mean $\| [\varphi(x),b]\| < \eps$ for all normalized elements $x\in M_n$. If $X \subseteq \A$ is some subset, we shall use the notation 
 $$\|[\varphi,X]\| < \eps$$ to mean
$\|[\varphi,b]\| < \eps$ for all $b \in X$.

\end{Notation}

The next lemma is a special case of Proposition 1.9 of \cite{winter-dr-Z}

\begin{Lemma}
\label{lemma:orderzerofunctionalcalculus}
For any $f \in C_0((0,1])_+$ and any $\eps>0$ there is an $\eta>0$ such that whenever $\varphi:M_n \to \A$ is a c.p.c. order zero map and $b$ is a contraction satisfying $\|[\varphi,b]\|<\eta$, we have that $\|[f[\varphi],b]\|<\eps$.
\end{Lemma}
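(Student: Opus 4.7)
The plan is to use the structural decomposition $f[\varphi](x) = \pi(x) f(h)$ recalled just above the statement, where $h = \varphi(1)$ and $\pi: M_n \to \A^{**} \cap \{h\}'$ is the supporting $*$-homomorphism, and reduce to the polynomial case. Since $f \in C_0((0,1])_+$ we have $f(0) = 0$, so by the Weierstrass theorem $f$ can be approximated uniformly on $[0,1]$ to within $\eps/3$ by a polynomial $p(t) = \sum_{k=1}^N a_k t^k$ with no constant term. Because $h$ is a positive contraction and $\pi(x)$ commutes with $f(h)$ and $p(h)$ in $\A^{**}$, we get
$$\|(f[\varphi] - p[\varphi])(x)\| = \|\pi(x)(f(h) - p(h))\| \leq \|x\| \sup_{t \in [0,1]} |f(t) - p(t)| < \eps/3$$
for every contraction $x \in M_n$, and hence $\|[(f[\varphi] - p[\varphi])(x), b]\| < 2\eps/3$ using $\|b\| \leq 1$. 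It thus suffices to arrange that $\|[p[\varphi](x), b]\| < \eps/3$.

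For each monomial $t^k$ with $k \geq 1$, the key identity is $\pi(x) h^k = \varphi(x) h^{k-1}$, which follows from $\varphi(x) = \pi(x) h$ and $[\pi(x), h] = 0$. This expresses $t^k[\varphi](x)$ as a genuine element of $\A$, and a Leibniz expansion gives
$$[t^k[\varphi](x), b] = [\varphi(x), b]\, h^{k-1} + \varphi(x)\,[h^{k-1}, b].$$
Applying the hypothesis $\|[\varphi, b]\| < \eta$ both to the contraction $x$ and, via a standard inductive commutator identity for powers of contractions, to $h = \varphi(1)$ yields $\|[h^{k-1}, b]\| \leq (k-1)\eta$, so $\|[t^k[\varphi](x), b]\| \leq k\eta$. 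Summing, $\|[p[\varphi](x), b]\| \leq \bigl(\sum_{k=1}^N k|a_k|\bigr)\eta$, and choosing $\eta$ so that this is less than $\eps/3$ finishes the proof.

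The only real choice is in the approximation step; everything else is bookkeeping with the order zero structure theorem. I do not expect a serious obstacle, but one must be careful that $\pi(x)$ lives only in $\A^{**}$, so the commutator manipulations should be carried out using the products $\pi(x)h^k = \varphi(x)h^{k-1}$ which land back in $\A$. The constants $N$ and $\sum k|a_k|$ depend on $f$ and $\eps$ but not on $\varphi$, $b$, or $n$, yielding the uniform $\eta$ claimed in the lemma.
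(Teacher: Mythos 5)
Your proof is correct and follows essentially the same route as the paper: both arguments rest on the order zero structure $f[\varphi](x)=\pi(x)f(h)$, a uniform approximation of $f$ by something vanishing at $0$, and the identity $\pi(x)h^k=\varphi(x)h^{k-1}$ to land back in $\A$ before estimating commutators via Leibniz-type expansions. The only (cosmetic) difference is that the paper first writes $f\approx zg$ with $g\in C_0((0,1])$ and then handles $[g(h),b]$ by polynomial approximation, whereas you approximate $f$ directly by a polynomial with zero constant term and bound each monomial with the telescoping estimate $\|[h^{k-1},b]\|\leq (k-1)\eta$; both yield the same uniform choice of $\eta$ depending only on $f$ and $\eps$.
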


\begin{proof}
Let $f$ and $\eps$ as above be given. We can find $g\in C_0((0,1])$ such that $\|gz-f\|<\frac{\eps}{3}$ where $z$ is the identity function on $(0,1]$. Find $\eta>0$ such that $\|[a,b]\|<\eta$ implies $\|[g(a),b]\|<\frac{\eps}{6}$ for any normalized elements $a,b\in \A$ with $a$ positive (this is easily done by approximating $g$ uniformly by polynomials). We may of course assume that $\eta<\frac{\eps}{6}$. Let $\varphi:M_n\to \A$ be a c.p.c. order zero map and $b\in \A$ a contraction such that $\|[\varphi,b]\|<\eta$. Let $\pi,h$ be as in the notation above. For any normalized $x\in M_n$ we have the following:
\begin{align*}
f[\varphi](x)b &= \pi(x)f(h)b \\
& \approx_{\eps/3} \pi(x)hg(h)b \\
& = \varphi(x)g(h)b \\
& \approx_{\eps/3} b\varphi(x)g(h) \\
& \approx_{\eps/3} b f[\varphi](x).
\end{align*}
\end{proof}

We will also be needing a slight variation of this previous lemma. We omit the proof since it is essentially the same as that of the preceding lemma.

\begin{Lemma}
\label{lemma:orderzerofunctionalcalculus2}
For any $f \in C_0((0,1])_+$ and any $\eps>0$ there is an $\eta>0$ such that whenever $\varphi:M_n \to \A$ is a c.p.c. order zero map and $\alpha\in \aut(\A)$ is an automorphism satisfying $\|\alpha(\varphi(x))-\varphi(x)\|<\eta$ for all normalized elements $x\in M_n$, we have that $\|\alpha(f[\varphi](x))-f[\varphi](x)\|<\eps$ for all normalized $x\in M_n$.
\end{Lemma}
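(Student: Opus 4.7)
The plan is to mirror the argument of Lemma \ref{lemma:orderzerofunctionalcalculus} essentially verbatim, replacing the commutator with an automorphism orbit. The key observation is that the structural identity $\varphi(x) = \pi(x)h$ with $h = \varphi(1)$ still lets us reduce computations about $f[\varphi](x) = \pi(x)f(h)$ to computations about $\varphi(x)$ times a functional calculus element of $h$, and that $h$ is itself one of the elements that $\alpha$ nearly fixes (taking $x = 1_{M_n}$, which is normalized).

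First, I would approximate $f$ uniformly by $gz$ where $z$ is the identity function on $(0,1]$ and $g \in C_0((0,1])$, with $\|f - gz\| < \eps/3$. This gives $\|f[\varphi](x) - \varphi(x)g(h)\| < \eps/3$ for every normalized $x$. Next, using uniform continuity of continuous functional calculus (by approximating $g$ by polynomials), I would pick $\eta > 0$ small enough that whenever $k$ is a positive contraction with $\|k - h\| < \eta$ one has $\|g(k) - g(h)\| < \eps/6$, and also so that $\eta < \eps/6$.

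Under the hypothesis $\|\alpha(\varphi(x)) - \varphi(x)\| < \eta$ for all normalized $x \in M_n$, applying this to $x = 1$ gives $\|\alpha(h) - h\| < \eta$, so $\|g(\alpha(h)) - g(h)\| < \eps/6$. Since $\alpha$ is a $*$-homomorphism, $\alpha(g(h)) = g(\alpha(h))$, hence $\|\alpha(g(h)) - g(h)\| < \eps/6$. For any normalized $x \in M_n$ I then chain the approximations
\begin{align*}
\alpha(f[\varphi](x)) & \approx_{\eps/3} \alpha(\varphi(x))\,\alpha(g(h)) \\
& \approx_{\eta} \varphi(x)\,\alpha(g(h)) \\
& \approx_{\eps/6} \varphi(x)\,g(h) \\
& \approx_{\eps/3} f[\varphi](x),
\end{align*}
which totals strictly less than $\eps$ since $\eta < \eps/6$.

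There is no real obstacle; the only subtlety is noticing that the hypothesis automatically controls $\alpha(h)$ (because $1_{M_n}$ is normalized), which is exactly what is needed to invoke continuity of functional calculus on $h$. This mirrors how in the previous lemma the commutator hypothesis was applied to $h$ via the image $\varphi(1)$.
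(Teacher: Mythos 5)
Your argument is correct and is exactly the proof the paper has in mind: the paper omits the proof, saying it is essentially the same as that of Lemma \ref{lemma:orderzerofunctionalcalculus}, and your adaptation (applying the hypothesis to $x=1_{M_n}$ to control $\alpha(h)$, then using $\alpha(g(h))=g(\alpha(h))$ together with uniform continuity of the functional calculus and the factorization $f[\varphi](x)\approx\varphi(x)g(h)$) is precisely that adaptation. One cosmetic point: the step $\alpha(\varphi(x))\alpha(g(h))\approx_{\eta}\varphi(x)\alpha(g(h))$ actually carries an error of $\eta\|g(h)\|$ and $g$ need not be a contraction, so you should choose $\eta<\eps/\bigl(6(1+\|g\|)\bigr)$ -- a harmless adjustment, and the paper's own proof of Lemma \ref{lemma:orderzerofunctionalcalculus} glosses over the same constant.
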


\section{Almost Unperforation of \TZA~ \Csalg s}
\label{section:almost-unperforation}

In this section we will prove that \TZA~ \Csalg s have almost unperforated Cuntz semigroup and therefore strict comparison. The proof mixes ideas from \cite{rordamuhf2} and \cite{rordam} with ideas originating from the study of tracially AF algebras (\cite{lin}).

Recall that a positive element $a \in \A$ is called \emph{purely positive} if $a$ is not Cuntz-equivalent to a projection. This is equivalent to saying that $0$ is an accumulation point of $\sigma(a)$.

\begin{Lemma}
\label{lemma:notprojection} 
Let $\A$ be a simple non type $I$ \Csalg. For any projection $p \in \A$ and $k>0$ there is a purely positive element $a \leq p$ such that $(k-1)\lb p \rb \leq k \lb a \rb$. 
\end{Lemma}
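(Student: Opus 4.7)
The case $k=1$ is vacuous (take $a=0$), so assume $k\geq 2$. Pass to the simple, unital, non-type-I $C^*$-algebra $\B := p\A p$ with unit $p$. Applying Proposition 4.10 of \cite{kirchberg-rordam}, exactly as in the proof of the previous lemma, yields a non-zero $*$-homomorphism $\theta\colon CM_k \to \B$; setting $c_i := \theta(z \otimes e_{ii})$ for $i=1,\ldots,k$ produces $k$ pairwise orthogonal, pairwise Cuntz-equivalent positive contractions in $\B$ with $\sum_i c_i \leq p$.

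The proposed witness is $a := p - c$, where $c := f(c_k) \in \overline{c_k\B c_k}$ is obtained by continuous functional calculus from $c_k$ for a suitable $f \in C_0((0,1])_+$ that is strictly positive on $(0,1]$. The condition $f>0$ on $(0,1]$ ensures $c$ is Cuntz-equivalent to $c_k$ (so that $\langle c\rangle = \langle c_i\rangle$ for every $i$), while $f(0)=0$ keeps $c$ orthogonal to each of $c_1,\ldots,c_{k-1}$. A further constraint on $f$ is that $\sigma_{\B}(c) = f(\sigma(c_k))$ should have $1$ as an accumulation point, which makes $\sigma_{\A}(a) = \{0\} \cup (1 - \sigma_{\B}(c))$ have $0$ as an accumulation point, so that $a$ is purely positive. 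Given these orthogonalities, $\sum_{i=1}^{k-1} c_i$ and $c$ are pairwise orthogonal positive contractions, so their sum is again a positive contraction $\leq p$; hence $\sum_{i=1}^{k-1} c_i \leq p - c = a$, and by the mutual orthogonality and Cuntz-equivalence of the $c_i$'s, $(k-1)\langle c\rangle = \langle \sum_{i=1}^{k-1} c_i\rangle \leq \langle a\rangle$.

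Combined with the subadditivity estimate $\langle p\rangle = \langle c+a\rangle \leq \langle c\rangle + \langle a\rangle$ for the Cuntz semigroup, multiplying by $k-1$ yields
\[
(k-1)\langle p\rangle \;\leq\; (k-1)\langle c\rangle + (k-1)\langle a\rangle \;\leq\; \langle a\rangle + (k-1)\langle a\rangle \;=\; k\langle a\rangle,
\]
as required. The hard part is engineering $\theta$ so that $\sigma(c_k)$ has a non-zero accumulation point (after which the $f$ above can simply send that accumulation point to $1$, e.g.\ $f(t) = \min(t/\mu,1)$ for that accumulation point $\mu$). For an arbitrary non-zero $\theta$ the spectrum of $c_k$ could be a finite set, in which case no such $f$ works; this must be ruled out using the non-type-I structure of $\B$, which guarantees the existence of positive elements in $\B$ with ``continuous'' (say interval) spectrum, letting one choose $\theta$ initially to have image with the needed spectral richness. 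I expect this spectral refinement to be the only delicate step in writing the proof out in full.
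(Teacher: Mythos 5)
Your construction follows the paper's own route: inside the corner $\B=p\A p$ you use Proposition 4.10 of \cite{kirchberg-rordam} to produce pairwise orthogonal, mutually Cuntz-equivalent positive elements $c_1,\ldots,c_k$ with $c_1+\cdots+c_k\leq p$, you set $a=p-c$ with $\lb c \rb=\lb c_1 \rb$, and from $c_1+\cdots+c_{k-1}\leq a$ together with subadditivity you get $(k-1)\lb p \rb\leq(k-1)\lb c \rb+(k-1)\lb a \rb\leq k\lb a \rb$. This Cuntz-semigroup part is correct and is essentially the paper's computation, there phrased as $1_k\otimes a\geq(1_k-e_{1,1})\otimes a+e_{1,1}\otimes\sum_{j=2}^{k}c_j$ in $M_k\otimes\B$.

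The genuine gap is exactly the step you flag and postpone: pure positivity of $a$. From a merely non-zero homomorphism $\theta\colon CM_k\to\B$ nothing can be said about $\sigma(c_k)$; for instance, if $\B$ has an abundance of projections (say a corner of a UHF algebra), $\theta$ could factor through evaluation at $1\in(0,1]$ followed by an embedding of $M_k$, in which case the $c_i$ are projections, $a=p-c_1$ is a projection, and no choice of $f\in C_0((0,1])_+$ gives $f(c_k)$ a spectrum accumulating at $1$. So the ``spectral engineering'' you defer is not a routine refinement but the actual content of this half of the lemma. The paper closes it by taking $\theta$ \emph{injective}, which is what Proposition 4.10 of \cite{kirchberg-rordam} supplies (compare its use in Lemma \ref{lemma:sc}, where $\theta$ is an embedding): since the spectrum of $z\otimes e_{1,1}$ in $CM_k$ is $[0,1]$ and injective $*$-homomorphisms preserve spectra up to the point $0$, one gets $\sigma_{\B}(c_1)=[0,1]$, hence $\sigma_{\B}(p-c_1)=[0,1]$, so $0$ is an accumulation point of the spectrum and $a=p-c_1$ is purely positive with no functional-calculus modification needed. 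With that substitution (take $\theta$ injective and simply $c=c_1$) your argument is complete. A minor side remark: for $k=1$ the element $a=0$ is not purely positive (it is Cuntz equivalent to the zero projection); but any purely positive $a\leq p$, e.g.\ the one produced for $k\geq 2$, satisfies the then-vacuous inequality $0\leq\lb a \rb$.
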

\begin{proof}
Since $\A$ is simple and non type $I$, the algebra $\B = \overline{p\A p}$ is also non type $I$ (as in the proof of Lemma \ref{lemma:matrixtza}). Hence, by Proposition 4.10 of \cite{kirchberg-rordam} there is an injective homomorphism $\theta:CM_k \to \B$. Let $z$ denote the identity function on $(0,1]$ and let $c_i = \theta(e_{ii}\otimes z)$. Let $a = p-c_1$. It is clear by functional calculus (and because $\theta$ is injective) that $a$ is purely positive.

Additionally, we have that $k\lb a \rb$ is represented by $1_k \otimes a \in  M_k \otimes \B$, and notice that 
$$1_k \otimes a \geq (1_k-e_{11}) \otimes a + e_{11} \otimes \sum_{j=2}^{k}c_i.$$ 
The Cuntz class of the right hand side is $(k-1)\lb a \rb + (k-1) \lb c_1 \rb$, which dominates $(k-1) \lb a+c_1 \rb = (k-1) \lb p \rb$.
\end{proof}

\begin{Lemma}
\label{lemma:almostunperf}
Let $\A$ be a unital \TZA~ \Csalg. If $a,b \in \A$ such that $k\lb a \rb \leq k \lb b \rb$ in $W(\A)$ for some $k \in \N$ and $b$ is purely positive then $a \precsim b$.
\end{Lemma}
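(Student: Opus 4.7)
The plan is to prove $(a-\eps)_+ \precsim b$ for every $\eps > 0$. Rather than work in $\A$ with approximate commutators, I would pass into the central sequence algebra: Lemma \ref{lemma:centralsequencealgebra} yields a c.p.c.~order zero map $\varphi : M_k \to \A_\infty \cap \A'$ which commutes exactly with $a$, $b$, and with the entries of any sub-equivalence $X \in M_k(\A)$. This exact central structure is what will allow the $k$'s on both sides of $k \lb a \rb \leq k \lb b \rb$ to cancel.

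Fix $\eps > 0$. By the standard fact that $u \precsim v$ can be realized with a spectral cutoff on $v$, there exists $\sigma > 0$ with $1_k \otimes (a-\eps)_+ \precsim 1_k \otimes (b-\sigma)_+$ in $M_k(\A)$. Since $b$ is purely positive, we can further pick a nonzero positive $c \in \overline{b\A b}$ supported spectrally in $(0,\sigma)$, so that $c \perp (b-\sigma)_+$ and $c + (b-\sigma)_+ \precsim b$. Invoke Lemma \ref{lemma:centralsequencealgebra} with $n=k$ and this $c$ to obtain $\varphi : M_k \to \A_\infty \cap \A'$ with $1 - h \precsim c$, writing $h = \varphi(1)$, $e_i = \varphi(e_{ii})$, and $\sqrt{e_i} = \sqrt{[\varphi]}(e_{ii})$; all of these lie in $\A_\infty \cap \A'$, $\sqrt{e_i}\sqrt{e_j} = 0$ for $i \neq j$, and $\sum_i \sqrt{e_i} = h^{1/2}$.

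Since $h$ commutes exactly with $(a-\eps)_+$, the identity $(a-\eps)_+ = h^{1/2}(a-\eps)_+ h^{1/2} + (1-h)^{1/2}(a-\eps)_+ (1-h)^{1/2}$ is exact, which yields $(a-\eps)_+ \precsim h^{1/2}(a-\eps)_+ h^{1/2} \oplus c$ because $(1-h)^{1/2}(a-\eps)_+ (1-h)^{1/2} \precsim 1-h \precsim c$. Using $(b-\sigma)_+ + c \precsim b$ with orthogonal summands, it now suffices to prove $h^{1/2}(a-\eps)_+ h^{1/2} \precsim (b-\sigma)_+$. For this, observe that the elements $\sqrt{e_i}(a-\eps)_+\sqrt{e_i}$ are pairwise orthogonal (since $\sqrt{e_i}\sqrt{e_j}=0$), exactly sum to $h^{1/2}(a-\eps)_+ h^{1/2}$ by centrality, and are pairwise Cuntz-equivalent through conjugations derived from $\varphi(e_{1i})$, so that
$$\lb h^{1/2}(a-\eps)_+ h^{1/2}\rb \;=\; k\lb \sqrt{e_1}(a-\eps)_+\sqrt{e_1}\rb,$$
and analogously with $(b-\sigma)_+$ replacing $(a-\eps)_+$.

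Given any $\eta > 0$, pick $X \in M_k(\A)$ with $\|X(1_k \otimes (b-\sigma)_+) X^* - 1_k \otimes (a-\eps)_+\| < \eta$ and compress both sides by $1 \otimes \sqrt{e_1}$. Because $\sqrt{e_1}$ commutes exactly with $X$ and with $b$, this yields $\|X(1_k \otimes \sqrt{e_1}(b-\sigma)_+\sqrt{e_1}) X^* - 1_k \otimes \sqrt{e_1}(a-\eps)_+\sqrt{e_1}\| < \eta$, so $k\lb (\sqrt{e_1}(a-\eps)_+\sqrt{e_1} - \eta)_+\rb \leq k \lb \sqrt{e_1}(b-\sigma)_+\sqrt{e_1}\rb$. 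Rewriting both sides via the orthogonal-sum identity gives $\lb (h^{1/2}(a-\eps)_+ h^{1/2} - \eta)_+\rb \leq \lb (b-\sigma)_+\rb$; as $\eta$ is arbitrary, $h^{1/2}(a-\eps)_+ h^{1/2} \precsim (b-\sigma)_+$, as required. The main conceptual obstacle is to see that although $k$-fold Cuntz sub-equivalence does not cancel in $W(\A)$ in general, it \emph{does} cancel inside the corner $h \A_\infty h$, where the order zero $M_k$-image furnished by tracial $\Zh$-absorption writes $h^{1/2}(\cdot) h^{1/2}$ as an internal $k$-fold orthogonal sum of Cuntz-equivalent pieces.
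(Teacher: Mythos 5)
Your argument is correct in substance, and it implements the same underlying mechanism as the paper by a genuinely different route. Both proofs split $(a-\eps)_+$ into $h^{1/2}(a-\eps)_+h^{1/2}$ and $(1-h)^{1/2}(a-\eps)_+(1-h)^{1/2}$ with $h=\varphi(1)$, absorb the second summand into a nonzero ``tail'' $c=f(b)$ of $b$ orthogonal to $(b-\sigma)_+$ (this is exactly where pure positivity enters, in both proofs), and use the order zero copy of $M_k$ to divide the hypothesis $k\lb a\rb\le k\lb b\rb$ by $k$. The difference is how the division is carried out: the paper stays inside $\A$, where $\varphi$ only approximately commutes with the data, and builds an explicit approximate intertwiner $\hat c=\sum\sqrt{[\varphi]}(1)g[\varphi](e_{ij})c_{ij}$, paying with the $\mu/36k^2$-type estimates and Lemma \ref{lemma:orderzerofunctionalcalculus}; you exactify everything in $\A_\infty\cap\A'$ and use the identity $\lb h^{1/2}xh^{1/2}\rb=k\lb\sqrt{e_1}\,x\sqrt{e_1}\rb$ (an orthogonal sum of $k$ pairwise equivalent pieces, the equivalences implemented by $\sqrt{[\varphi]}(e_{1i})$), so that compressing the witness $X$ by the central $1_k\otimes\sqrt{e_1}$ turns the $k$-fold comparison directly into a comparison of single elements, with no cancellation in $W$ ever invoked. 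Two points must be added to make this complete. First, Lemma \ref{lemma:centralsequencealgebra} is stated and proved for \emph{separable} $\A$, whereas Lemma \ref{lemma:almostunperf} and Theorem \ref{thm:almostunperf} carry no separability hypothesis; as written your proof covers only the separable case. The repair is routine: fix witnesses $X_m$ for $\eta=1/m$ in advance and run the construction in $\A_\infty\cap S'$, where $S$ is the separable $C^*$-subalgebra generated by $a$, $b$ and the entries of the $X_m$ (the proof of Lemma \ref{lemma:centralsequencealgebra} goes through verbatim with approximate commutation required only on a dense sequence of $S$). Second, your chain of subequivalences lives in (matrices over) $\A_\infty$, so one must descend at the end: given $r\in\A_\infty$ with $\|rbr^*-(a-\eps)_+\|<\mu$, lift $r$ to a sequence in $\A$ and choose a suitable entry. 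Both repairs, like the standard facts used implicitly ($(x^*x-\eta)_+\sim(xx^*-\eta)_+$ for the cut-down pieces, and additivity of orthogonal sums in the Cuntz semigroup), are routine; what the paper's direct estimate buys is the non-separable case for free, while your version buys a cleaner bookkeeping-free argument.
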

\begin{proof}
Fix $\eps>0$. Without loss of generality we may assume that $\lnorm a \rnorm = \lnorm b \rnorm = 1$. We choose $c = (c_{ij}) \in M_k(\A)$ and $\delta>0$ such that $c \left[\left(b-\delta\right)_+ \otimes 1_k\right] c^* = \left(a-\eps\right)_+ \otimes 1_k$. Let $f\in C_0((0,1])$ be a non-negative function such that $f=0$ on $(\delta/2,1]$, $f>0$ on $(0,\delta/2)$ and $\lnorm f \rnorm = 1$ and denote $d = f(b)$. Note that $d \neq 0$ because $b$ is purely positive. We now fix $\mu>0$. We shall find $z \in \A$ such that $\lnorm z\left[\left(b-\delta\right)_++d\right]z^*-\left(a-\eps\right)_+\rnorm<\mu$. Since $\mu$ is arbitrary this will show that $\left(a-\eps\right)_+\precsim \left(b-\delta\right)_++d \precsim b.$
By replacing each $c_{ij}$ with $c_{ij}q(b)$ for some function $q\in C_0((0,1])$ that vanishes on $(0, \delta/2]$ and $q=1$ on $[\delta, 1]$ we may assume that $c_{ij}d = 0$. Note that after doing this we still have $c \left[\left(b-\delta\right)_+ \otimes 1\right] c^* = \left(a-\eps\right)_+ \otimes 1_k$ which yields

\begin{align}
\sum_{\ell=1}^{k}c_{i\ell}\left(b-\delta\right)_+c_{j\ell}^* =
\left \{
\begin{matrix}
	(a-\eps)_+ & \mid & i=j \\
	0 			& \mid & i \neq j 
\end{matrix}  \label{eq:1.1}
\right .
\end{align}

Let $g,h\in C_0((0,1])$ be defined by
\begin{align*}
g(t)&= \begin{cases}\sqrt{\frac{7}{\mu}t}, & t\leq \frac{\mu}{7} \\ 1, & t\geq\frac{\mu}{7} \end{cases}, & h(t) &= 1-\sqrt{1-t}
\end{align*}
We observe that those functions satisfy the following.
\begin{align} \label{eq:1.2}
|g(t)^2t-t|&<\frac{\mu}{6}, &1-h(t)=&\sqrt{1-t} 
\end{align}
in the algebra $C_0((0,1])$.

Find a c.p.c order zero map $\varphi: M_k \to \A$ such that $1-\varphi(1) \precsim d$ and $\lnorm [\varphi, F] \rnorm < \eta$ where $F = \left\{\left(a-\eps\right)_+\, , \left(b-\delta\right)_+\right\} \cup \{c_{ij}\}$, and where $\eta>0$ is chosen using Lemma \ref{lemma:orderzerofunctionalcalculus} such that
\begin{align*}
\|[g[\varphi],F]\|&<\frac{\mu}{36k^2}, & \|[\sqrt{[\varphi]},F]\|&<\frac{\mu}{36k^2}, & \|[h[\varphi],F]\|&<\frac{\mu}{3}.
\end{align*}
Let $a_1=\varphi(1)\left(a-\eps\right)_+$. Denote  $r = 1-\varphi(1)$ and set $a_2 = r^{1/2}\left(a-\eps\right)_+r^{1/2}$. We have
$$\|\left(a-\eps\right)_+ -(a_1+a_2)\|<\frac{\mu}{3}$$
since $r^{1/2}=1-h[\varphi](1)$.
We denote $g_{ij} = g([\varphi])(e_{ij})$.
We now define $\hat{c}_{ij} = \sqrt{[\varphi]}(1)g_{ij}c_{ij}$ and $\hat{c} = \sum_{i,j=1}^k \hat{c}_{ij}$.

Our goal is to first show that $\|\hat c \left(b-\delta\right)_+ \hat c ^* - a_1\|<\frac{\mu}{3}$:
\begin{align*}
\hat{c} \left(b-\delta\right)_+ \hat{c}^* &= \sum_{i,j,m,\ell=1}^k \hat c_{ij}\left(b-\delta\right)_+ \hat c_{m\ell}^*\\
& = \sqrt{[\varphi]}(1)\left(\sum_{i,j,m,\ell=1}^k g_{ij}c_{ij}\left(b-\delta\right)_+c_{m\ell}^*g_{\ell m}\right)\sqrt{[\varphi]}(1) \\
& \approx_{\mu/6} \sum_{i,j,m,\ell=1}^k g_{ij}g_{\ell m}c_{ij}\left(b-\delta\right)_+c_{m\ell}^*\\
& = \varphi(1)g[\varphi](1)\sum_{i,j,m=1}^k g_{im}c_{ij}\left(b-\delta\right)_+c_{mj}^* \\
& = \varphi(1)g[\varphi](1)\sum_{i,m=1}^k g_{im}\sum_{j=1}^kc_{ij}\left(b-\delta\right)_+c_{mj}^*\\
& = \varphi(1)g[\varphi](1)\sum_{i=1}^k g_{ii}(a-\eps)_+\\
& = \varphi(1)g^2[\varphi](1)(a-\eps)_+\\
& \approx_{\mu/6} \varphi(1)(a-\eps)_+\\
& = a_1
\end{align*} 
where the first approximation is due to our choice of $\eta$ and the second follows from (\ref{eq:1.2}).

We now deal with $a_2$. Since $a_2 \precsim r \precsim d$, we may find $s \in \A$ such that $\lnorm sds^*- a_2 \rnorm<\mu/3$. Furthermore, by replacing $s$ with $sp(b)$ where $p\in C_0([0,1])$ is some function that is $1$ on $[0, \delta/2]$ and vanishes on $[\delta, 1]$, we may assume that $s\left(b-\delta\right)_+ = 0$. We take $z = \hat c + s$ and calculate:
\begin{multline*}
\lnorm z\left[\left(b-\delta\right)_++d\right]z^*-\left(a-\eps\right)_+\rnorm=\lnorm\hat c\left(b-\delta\right)_+\hat c^*+sds^*- \left(a-\eps\right)_+ \rnorm\\
	\leq \lnorm\hat c \left(b-\delta\right)_+\hat c^*-a_1\rnorm+\lnorm sds^*-a_2\rnorm + \lnorm a_1 + a_2-\left(a-\eps\right)_+\rnorm<\mu.
\end{multline*}
Since this holds for every $\mu>0$ we have that $\left(a-\eps\right)_+ \precsim b$. Finally, since $\eps$ is arbitrary, we have $a \precsim b.$
\end{proof}

\begin{Thm}
\label{thm:almostunperf}
Let $\A$ be a simple unital $C^*$-algebra. If $\A$ is \TZA~ then $W(\A)$ is almost unperforated, and therefore $\A$ has strict comparison.
\end{Thm}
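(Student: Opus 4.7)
The plan is to reduce to the case $a, b \in \A$ and then split into two cases depending on the structure of $b$. Given positive elements $a, b \in M_\infty(\A)$ with $(n+1)\lb a \rb \leq n\lb b \rb$, I would pick $k$ with $a, b \in M_k(\A)$. By Lemma \ref{lemma:matrixtza}, $M_k(\A)$ is again simple, unital, and \TZA, and since $W(\A)\cong W(M_k(\A))$, it suffices to prove the statement under the additional assumption $a, b \in \A$. The goal then is to show $\lb a \rb \leq \lb b \rb$.

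If $b$ is purely positive, then the bound $n\lb a\rb \leq (n+1)\lb a\rb \leq n\lb b\rb$ together with Lemma \ref{lemma:almostunperf} (applied with $k=n$) immediately gives $a\precsim b$. If instead $b$ is Cuntz-equivalent to a projection $p$, I would invoke Lemma \ref{lemma:notprojection} with $k=n+1$ to produce a purely positive $c\leq p$ satisfying $n\lb p \rb \leq (n+1)\lb c\rb$. Multiplying the hypothesis by $n+1$ then yields
\[
(n+1)^2\lb a\rb \;\leq\; n(n+1)\lb p\rb \;\leq\; (n+1)^2\lb c\rb,
\]
so Lemma \ref{lemma:almostunperf} applied with coefficient $(n+1)^2$ gives $a\precsim c\precsim p\sim b$. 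These two cases exhaust the possibilities, since every positive element is either purely positive or Cuntz-equivalent to a projection. The strict comparison conclusion then follows from almost unperforation by the standard argument of R{\o}rdam in \cite{rordam}.

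The substantive content is already in Lemmas \ref{lemma:notprojection} and \ref{lemma:almostunperf}; the task here is simply to arrange the multipliers so that the ratio $(n+1)/n$ in the definition of almost unperforation is absorbed, and taking $k=n+1$ in Lemma \ref{lemma:notprojection} accomplishes exactly this. The one mild point to verify is that $\A$ is non-type I (required by Lemma \ref{lemma:notprojection}), but this follows because a simple unital type I \Csalg~ must be a matrix algebra $M_n$, and for $n\geq 2$ no such algebra is \TZA: an approximately central order zero map into $M_n$ is forced to be approximately scalar, which is incompatible with $1-\varphi(1)$ being arbitrarily small in the Cuntz order, while $\C$ is excluded by the definition.
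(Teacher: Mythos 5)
Your proposal is correct and takes essentially the same route as the paper: reduce to $a,b\in\A$ via Lemma \ref{lemma:matrixtza}, handle purely positive $b$ with Lemma \ref{lemma:almostunperf}, and otherwise pass to a purely positive $c\leq p\sim b$ via Lemma \ref{lemma:notprojection} (the paper simply chains $k\lb a\rb \leq (k-1)\lb p\rb \leq k\lb c\rb$ rather than multiplying through by $n+1$, but that is only bookkeeping). Your explicit verification that a \TZA~ algebra is non-type I, needed for Lemma \ref{lemma:notprojection} and left implicit in the paper, is sound.
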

\begin{proof}

Let $a,b \in M_n(\A)_+$ such that $k\lb a \rb \leq (k-1)\lb b \rb$ for some $k$. We want to show that $\lb a \rb \leq \lb b \rb$. Since $M_n(\A)$ is also \TZA, we may assume without loss of generality that $a,b \in \A$.
If $b$ is purely positive then we are through because in particular $k\lb a \rb \leq k \lb b \rb$ and now we can apply Lemma $\ref{lemma:almostunperf}$.

If $b$ is not purely positive then $b$ is Cuntz equivalent to a projection $p \in \A$ and then by Lemma $\ref{lemma:notprojection}$ we may find a purely positive element $b^\prime \in \A$ such that $(k-1)\lb p \rb \leq k\lb b^\prime \rb$ and $b^\prime \leq p$. We may now apply Lemma $\ref{lemma:almostunperf}$ to get $a \precsim b^\prime \leq p \sim b$ 
\end{proof}

\section{Tracial $\Zh$ absorption and $\Zh$-absorption in the nuclear case}
 \label{section:nuclear}

The main result of this section is the following theorem.
\begin{Thm} \label{thm:tzaimpliesza}
Let $\A$ be a simple, separable, unital, nuclear \Csalg. If $\A$ is \TZA~ then $\A \cong \A \otimes \Zh$. 
\end{Thm}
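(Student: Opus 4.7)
The plan is to follow the Matui--Sato strategy. By the R{\o}rdam--Winter characterization of $\Zh$-absorption recalled in the introduction, it suffices to produce, for every $n$, c.p.c.\ order zero maps $\varphi \colon M_n \to \A_\infty \cap \A'$ and $\psi \colon M_2 \to \A_\infty \cap \A'$ satisfying $\psi(e_{1,1}) \varphi(e_{1,1}) = \psi(e_{1,1})$ and $\psi(e_{2,2}) = 1 - \varphi(1)$. The map $\varphi$ is supplied essentially for free by Lemma \ref{lemma:centralsequencealgebra}: given any nonzero positive contraction $a \in \A$, we obtain $\varphi$ with $1 - \varphi(1) \precsim a$ in $\A_\infty$. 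The real task is the construction of the ``half-flip'' map $\psi$.

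For this I would establish Matui--Sato's property (SI) in our setting: whenever $e, f \in \A_\infty \cap \A'$ are positive contractions with $\lim_{k \to \infty} \sup_{\tau \in T(\A)} \tau(e^k) = 0$ and $\inf_{\tau \in T(\A)} d_\tau(f) > 0$, there exists $s \in \A_\infty \cap \A'$ with $s^* s = e$ and $f s = s$. Two ingredients enter the proof: strict comparison for $\A$ from Theorem \ref{thm:almostunperf}, which converts the tracial smallness of $e$ into a genuine Cuntz-subequivalence $e \precsim (f - \delta)_+$ inside $\A_\infty$; and the central-sequence form of TZA (Lemma \ref{lemma:centralsequencealgebra}), which furnishes an approximately central order zero copy of a sufficiently large matrix algebra inside $\A_\infty \cap \A'$. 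Together with a standard reindexing argument using separability and nuclearity, these upgrade an approximate solution of $s^* s = e$, $f s = s$ to an exact one that commutes with $\A$.

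With property (SI) in hand, I apply it to $e := 1 - \varphi(1)$ and $f := \varphi(e_{1,1})$. The hypothesis $\sup_\tau \tau(e^k) \to 0$ is arranged by choosing the sequence defining $\varphi$ so that the defects are Cuntz-dominated by positive contractions of uniformly decreasing tracial rank; meanwhile $d_\tau(f) \geq 1/n$ uniformly in $\tau$, since $f$ is the image of a minimal projection of $M_n$ under a c.p.c.\ order zero map whose unit is tracially close to $1$. The resulting $s$ then assembles an order zero map $\psi \colon M_2 \to \A_\infty \cap \A'$ via its polar-type structure: $\psi(e_{2,2}) = s^* s = 1 - \varphi(1)$, while $\psi(e_{1,1})$ lies in the hereditary subalgebra generated by $f$, which yields $\psi(e_{1,1}) \varphi(e_{1,1}) = \psi(e_{1,1})$. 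The R{\o}rdam--Winter criterion then delivers $\A \cong \A \otimes \Zh$.

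The principal obstacle is the proof of property (SI); this is the technical heart of the Matui--Sato argument. The novelty in our setting is that their proof uses the projections coming from a tracial Rokhlin property, and one must transplant it so that it instead leverages the order zero maps supplied by TZA. The abundance of approximately central matrix-algebra images provided by Lemma \ref{lemma:centralsequencealgebra}, combined with strict comparison from Theorem \ref{thm:almostunperf}, should suffice to carry the argument through, though some care is required in the approximations because one works throughout with genuine positive contractions rather than with projections.
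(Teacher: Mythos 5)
Your overall route is the one the paper takes: strict comparison from Theorem \ref{thm:almostunperf} plus the Matui--Sato machinery (excision in small central sequences, property (SI)), with the \TZA~ hypothesis supplying the approximately central order zero matrix maps; the paper simply black-boxes the Matui--Sato part as Theorem \ref{thm:ms} and feeds it Lemma \ref{lemma:tracialalmostdivisibility}. The genuine gap in your plan is that you never treat the case $T(\A)=\emptyset$. A simple unital \TZA~ algebra can be traceless (any purely infinite $\Zh$-absorbing algebra, e.g.\ $\mathcal{O}_2$, is \TZA), and in that case every trace-quantified hypothesis in your property (SI) becomes vacuous while the Matui--Sato argument you intend to transplant genuinely needs a nonempty trace space (their Proposition 2.2 works inside the finite von Neumann algebra coming from $T(\A)$, and this is exactly why Theorem \ref{thm:ms} assumes $T(\A)\neq\emptyset$). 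So your argument proves nothing in the traceless case. The paper disposes of it separately: traceless plus strict comparison plus simplicity and nuclearity gives pure infiniteness by R{\o}rdam, then Kirchberg's $\mathcal{O}_\infty$-absorption theorem together with \cite{winter2} yields $\Zh$-stability. You need this (or some substitute) before restricting attention to the traced, stably finite situation where Lemma \ref{lemma:tracialalmostdivisibility} applies.

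A second, smaller point: the divisibility input the Matui--Sato machinery actually consumes is stronger than what you verify. It is not enough that $d_\tau(\varphi(e_{1,1}))\geq 1/n$ uniformly in $\tau$ for the limit element; one needs, for positive contractive representatives $(c_n)$ of $\psi(e_{1,1})$ in $\prod_\N\A$, the uniform estimate $\max_{\tau\in T(\A)}|\tau(c_n^m)-1/k|\to 0$ for every power $m$ --- this is precisely the hypothesis of Theorem \ref{thm:ms}, which stands in for Lemma 3.3 of \cite{matui-sato} both in the excision/(SI) step and in the final step. Since $\tau(f^m)$ for large $m$ can be much smaller than $d_\tau(f)$, your one-line justification does not plug into Matui--Sato's Proposition 2.2 as stated. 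The paper obtains the required estimate in Lemma \ref{lemma:tracialalmostdivisibility}: choose the defects dominated by elements $c_n$ from Lemma \ref{lemma:sc} with $\max_\tau d_\tau(c_n)\to 0$, and then use the Cuntz equivalence $1-z^m\sim 1-z$ in $C([0,1])$ to convert smallness of $d_\tau(1-\varphi_n(1))$ into uniform control of $\tau\bigl(\sum_i\varphi_n(e_{i,i})^m\bigr)$. With these two repairs (the traceless dichotomy and the power-uniform tracial estimate), your outline coincides with the paper's proof.
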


The following theorem is a minor modification of results from \cite{matui-sato}.

\begin{Thm} \label{thm:ms}
Let $\A$ be a unital, separable, simple, nuclear \Csalg~ such that $\A$ has strict comparison, $T(\A)\neq\emptyset$, and the following condition holds:

For any $k\in \N$ there exists a c.p.c. order zero map $\psi:M_k \to \A_\infty\cap \A^\prime$ and a representative
$$(c_n)_{n\in\N}\in \prod_{n\in\N}\A$$
of $\psi(e_{1,1})\in\A_\infty$ such that $c_n\in \A$ is a positive contraction for all $n$ and
$$\lim_{n\to\infty} \max_{\tau\in T(\A)} |\tau(c_n^m)-1/k|=0.$$
Then $\A \cong \A\otimes\Zh$.
\end{Thm}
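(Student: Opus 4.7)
The plan is to verify, for each $k \in \N$, a Rordam-Winter-type criterion for $\Zh$-absorption: produce compatible c.p.c.\ order zero maps $\varphi : M_k \to \A_\infty \cap \A'$ and $\sigma : M_2 \to \A_\infty \cap \A'$ satisfying $\sigma(e_{22}) = 1 - \varphi(1)$ and $\sigma(e_{11})\,\varphi(1) = \sigma(e_{11})$. Together these factor a unital $*$-homomorphism $\Zh_{k,k+1} \to \A_\infty \cap \A'$, which (for $\A$ simple, separable, unital, nuclear) is equivalent to $\A \cong \A \otimes \Zh$. For each $k$, I would take $\varphi$ to be the given $\psi$ from the hypothesis; the substantive task is to construct $\sigma$.

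The central ingredient, extracted from \cite{matui-sato}, is \emph{property (SI)} for the central sequence algebra: for positive contractions $e, f \in \A_\infty \cap \A'$ with $\lim_m \sup_{\tau \in T(\A)} \tau_\omega(e^m) = 0$ while $\inf_m \inf_{\tau \in T(\A)} \tau_\omega(f^m) > 0$, there exists $s \in \A_\infty \cap \A'$ with $s^* s = e$ and $f s = s$. Its proof uses strict comparison of $\A$ together with nuclearity: the latter identifies the limit traces on $\A_\infty \cap \A'$ and ensures that the tracial GNS von Neumann algebras are injective, hence hyperfinite, after which a Dixmier-type averaging combined with a reindexation argument promotes a Cuntz subequivalence holding in $\A_\infty$ to the stated exact equality with a witness lying in the relative commutant.

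I would apply (SI) with $e = 1 - \psi(1)$ and $f = \psi(e_{11})$. By the Winter-Zacharias structure theorem for order zero maps, $\psi(e_{ii}) = \pi(e_{ii})\,h$ with $h = \psi(1)$ and a $*$-homomorphism $\pi : M_k \to \A_\infty^{**}$ commuting with $h$; a short trace computation using the matrix units $\pi(e_{i1})$ shows $\tau_\omega(\psi(e_{ii})^m) = \tau_\omega(\psi(e_{11})^m)$ for every limit trace. The hypothesis then gives $\tau_\omega(\psi(e_{11})^m) = 1/k$, so $\tau_\omega(\psi(1)) = 1$ and $\tau_\omega((1-\psi(1))^m) = 0$ for $m \geq 1$. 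Property (SI) yields $s \in \A_\infty \cap \A'$ with $s^* s = 1 - \psi(1)$ and $\psi(e_{11}) s = s$; from $s$ I would then build the desired $\sigma : M_2 \to \A_\infty \cap \A'$ via the order zero structure theorem, with $s$ as off-diagonal entry and $s^* s = 1 - \psi(1)$, $s s^* \leq \psi(e_{11})$ as diagonal entries. The relation $\psi(e_{11}) s = s$ directly yields the compatibility $\sigma(e_{11})\,\varphi(1) = \sigma(e_{11})$, completing the construction and invoking the Rordam-Winter criterion.

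The main obstacle will be establishing property (SI) with the witness $s$ living in the \emph{relative commutant} $\A_\infty \cap \A'$ rather than merely in $\A_\infty$ (where strict comparison alone would suffice). This is precisely where the nuclearity hypothesis enters in an essential way, via the Matui-Sato analysis of limit traces through injective GNS completions and their reindexation trick; translating their argument from the ultrapower $\A_\omega$ to the sequence algebra $\A_\infty$ is the only new point, and it is routine.
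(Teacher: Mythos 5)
Your overall architecture coincides with the one the paper follows (which is taken from Matui--Sato): use the hypothesized order zero map $\psi$ together with property (SI) applied to $e=1-\psi(1)$ and $f=\psi(e_{11})$ to produce $s\in\A_\infty\cap\A'$ with $s^*s=1-\psi(1)$ and $\psi(e_{11})s=s$, recognize these relations as the R{\o}rdam--Winter presentation of $\Zh_{k,k+1}$, and conclude $\Zh$-absorption from the existence of unital $*$-homomorphisms $\Zh_{k,k+1}\to\A_\infty\cap\A'$. That half of your argument, including the computation $\tau(\psi(e_{ii})^m)=\tau(\psi(e_{11})^m)$ via the order zero structure theorem, is precisely the proof of (iv)$\Rightarrow$(i) of Theorem 1.1 of \cite{matui-sato} with the theorem's hypothesis substituted for their Lemma 3.3, and it is fine.

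The gap is in how you propose to obtain property (SI). You claim it follows from strict comparison together with nuclearity alone, via Matui--Sato's excision argument, with the passage from the ultrapower to $\A_\infty$ as ``the only new point.'' That is not what their argument gives: the implication (ii)$\Rightarrow$(iii) of \cite{matui-sato} (strict comparison implies excision in small central sequences, whence (SI)) is proved under their standing assumption that $T(\A)$ has finitely many extreme points, and that assumption is used essentially there --- hyperfiniteness of a finite direct sum of II$_1$ factors is what produces approximately central, matrix-like excising elements whose tracial behaviour is controlled \emph{uniformly} over $T(\A)$; with infinitely many extremal traces no such uniform control is free, and whether strict comparison plus nuclearity implies (SI) for general trace simplices is exactly the hard, unresolved part of this circle of ideas, not a routine adaptation. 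The role of the hypothesis of the theorem is to supply precisely this uniform divisibility device, and it has to be used \emph{twice}: once inside the excision/(SI) step (this is what the paper does when it applies Proposition 2.2 of \cite{matui-sato}, using the hypothesis and $T(\A)\neq\emptyset$), and once more in place of Lemma 3.3 as in your second half. Your proposal is repaired by feeding the maps $\psi$ (together with strict comparison and nuclearity) into the excision argument as well, rather than asserting (SI) outright from strict comparison and nuclearity.
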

\begin{proof}
The proof of $(ii) \Rightarrow (iii)$ of Theorem 1.1 of \cite{matui-sato} now shows any c.p. map $\varphi:\A\to \A$ can be excised in small central sequences (See Definition 2.1 of \cite{matui-sato}). For this step we have used the hypothesis of the theorem and the fact that $T(\A)\neq \emptyset$ in order to apply Proposition 2.2 of \cite{matui-sato}. This implies that $\A$ has property (SI). Now the proof of $(iv) \Rightarrow (i)$ of Theorem 1.1 of \cite{matui-sato} shows that $\A$ is $\Zh$ absorbing. For this we have once again used the hypothesis of the theorem instead of Lemma 3.3 of \cite{matui-sato}.
\end{proof}

\begin{Lemma} \label{lemma:sc}
Let $\A$ be a simple, separable, unital, infinite dimensional \Csalg. For any $n$ there exists a positive contraction $c\in \A$ such that $c\neq 0$ and $d_\tau(c)\leq 1/n$ for all $\tau \in T(\A)$.
\end{Lemma}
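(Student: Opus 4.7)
The plan is to produce $n$ pairwise orthogonal, mutually Cuntz-equivalent positive contractions $c_1,\ldots,c_n \in \A$ whose sum is again a contraction. Taking $c = c_1$ then finishes the proof: for any $\tau \in T(\A)$, orthogonality and Cuntz-equivalence of the $c_i$ give
$$n\, d_\tau(c) \;=\; d_\tau(c_1+\cdots+c_n) \;\leq\; d_\tau(1_\A) \;=\; 1.$$

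To produce such elements I would mimic the construction already used in the proof of Lemma \ref{lemma:notprojection}. Since $\A$ is simple, unital, and infinite dimensional, it is not of type $I$, so Proposition 4.10 of \cite{kirchberg-rordam} (a corollary of Glimm's theorem) supplies a non-zero homomorphism $\theta : CM_n \to \A$. Writing $CM_n = C_0((0,1])\otimes M_n$ and letting $z$ denote the identity function on $(0,1]$, I would set $c_i = \theta(z\otimes e_{ii})$. The defining relations immediately give that the $c_i$ are pairwise orthogonal positive contractions, that they are mutually Cuntz-equivalent via the elements $\theta(z\otimes e_{i1})$ (since $\theta(z\otimes e_{i1})\theta(z\otimes e_{i1})^* = \theta(z^2\otimes e_{ii})$ is Cuntz-equivalent to $c_i$ by functional calculus), and that $c_1+\cdots+c_n = \theta(z\otimes 1_n)$ is again a positive contraction in $\A$.

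There is no real obstacle in this argument. The only nontrivial input is the existence of the homomorphism from $CM_n$ into $\A$, which is precisely what the Kirchberg--R\o rdam result provides; everything else is a one-line consequence of the additivity of $d_\tau$ on orthogonal positive elements and its invariance under Cuntz-equivalence.
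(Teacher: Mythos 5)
Your proposal is correct and follows essentially the same route as the paper: the paper also invokes Proposition 4.10 of \cite{kirchberg-rordam} to get a homomorphism $\theta:CM_n\to\A$ and takes $c=\theta(z\otimes e_{1,1})$, leaving the estimate $d_\tau(c)\leq 1/n$ as ``immediate'' — which is exactly the orthogonality/Cuntz-equivalence computation you spell out.
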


\begin{proof}
Let $\theta:CM_n \to \A$ be an embedding as given by Proposition 4.10 of \cite{kirchberg-rordam}. We set $c=\theta(z \otimes e_{1,1})$, and it is immediate that $c$ satisfies the required property.
\end{proof}

\begin{Lemma} \label{lemma:tracialalmostdivisibility}
Let $\A$ be a simple, separable, stably finite, unital \Csalg. If $\A$ is \TZA~ then for any $k\in \N$, we can find a sequence of c.p.c. order zero maps $\varphi_n:M_k\to \A$ such that \begin{enumerate}
\item \label{property:1} $\displaystyle \lim_{n\to \infty} \max_{\tau \in T(\A)} |\tau(\varphi_n(e_{1,1})^m)-1/k|=0$ for all $m\in \N$.
\item \label{property:2} $\displaystyle \lim_{n\to\infty}\lnorm[a,\varphi_n(x)]\rnorm = 0$ for all $a\in\A$ and for all $x\in M_k$.
\end{enumerate}
\end{Lemma}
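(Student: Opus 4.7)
The plan is to construct each $\varphi_n$ directly from the \TZA~ condition, by choosing the "small" positive element in Definition \ref{def:TZA} to be Cuntz-small in trace (hence, close to zero in any $\tau$) and by running through a dense sequence in $\A$ to enforce approximate centrality. Property (2) will then be essentially automatic, and property (1) will follow from a short calculation using the Winter--Zacharias structure of order zero maps together with the telescoping bound $1-h^m = \sum_{j=0}^{m-1} h^j(1-h)$.

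First I would fix, for each $n$, a positive contraction $c_n\in\A$ with $c_n\neq 0$ and $d_\tau(c_n)\leq 1/n$ for every $\tau\in T(\A)$; this is exactly Lemma \ref{lemma:sc} (applicable since $\A$ is simple, unital, and infinite dimensional — note that a \TZA~ algebra is, by definition, not $\C$). Pick a dense sequence $(b_j)_{j\in\N}$ in the unit ball of $\A$ and set $F_n=\{b_1,\dots,b_n\}$. Applying Definition \ref{def:TZA} with parameters $a=c_n$, $F=F_n$, $\eps=1/n$ produces a c.p.c.\ order zero map $\varphi_n:M_k\to\A$ satisfying $1-\varphi_n(1)\precsim c_n$ and $\|[\varphi_n(x),y]\|<1/n$ for every normalized $x\in M_k$ and every $y\in F_n$. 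Property (2) of the lemma is then a routine approximation: given $a\in\A$ and $x\in M_k$, approximate $a$ in norm by some $b_j$ and use that $\|\varphi_n(x)\|\leq\|x\|$ to dominate $\|[a,\varphi_n(x)]\|$ by $\|[b_j,\varphi_n(x)]\|+2\|a-b_j\|\|x\|$, which tends to $0$.

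For property (1), the heart of the matter is the identity $\tau(\varphi_n(e_{1,1})^m)=\frac{1}{k}\tau(\varphi_n(1)^m)$ for every trace $\tau$. By the Winter--Zacharias structure theorem there is a $*$-homomorphism $\pi_n:M_k\to\A^{**}$ such that $\pi_n(M_k)$ commutes with $h_n:=\varphi_n(1)$ and $\varphi_n(x)=\pi_n(x)h_n$. Consequently $\varphi_n(e_{i,i})^m=\pi_n(e_{i,i})h_n^m$ and $\sum_{i=1}^k\varphi_n(e_{i,i})^m = \pi_n(1)h_n^m=h_n^m$. Extending $\tau$ normally to $\A^{**}$ and conjugating by $\pi_n(u_{i1})$, where $u_{i1}\in M_k$ is the permutation unitary interchanging the indices $1$ and $i$, one deduces $\tau(\pi_n(e_{i,i})h_n^m)=\tau(\pi_n(e_{1,1})h_n^m)$ (using that $\pi_n(u_{i1})^*\pi_n(u_{i1})=\pi_n(1)$ absorbs into $h_n^m$). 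Summing over $i$ yields the claimed identity.

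With this in hand, the final estimate is elementary: since $h_n$ and $1-h_n$ commute and are positive contractions,
\begin{equation*}
1-h_n^m=\sum_{j=0}^{m-1}h_n^j(1-h_n)\leq m(1-h_n),
\end{equation*}
so $\tau(1-\varphi_n(1)^m)\leq m\,\tau(1-\varphi_n(1))$. Because $1-\varphi_n(1)$ is a positive contraction, $\tau(1-\varphi_n(1))\leq d_\tau(1-\varphi_n(1))$, and Cuntz subequivalence to $c_n$ gives $d_\tau(1-\varphi_n(1))\leq d_\tau(c_n)\leq 1/n$. Combining everything,
\begin{equation*}
\Bigl|\tau(\varphi_n(e_{1,1})^m)-\tfrac{1}{k}\Bigr|=\tfrac{1}{k}\tau(1-\varphi_n(1)^m)\leq \tfrac{m}{kn},
\end{equation*}
uniformly in $\tau\in T(\A)$, which establishes property (1). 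The main obstacle is the unitary-conjugation identity across the $\tau(\varphi_n(e_{i,i})^m)$, but once one invokes the Winter--Zacharias structure theorem this is a short bookkeeping exercise; after that the proof reduces to the telescoping inequality and the definition of Cuntz comparison.
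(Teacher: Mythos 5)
Your construction is exactly the paper's: Lemma \ref{lemma:sc} supplies the elements $c_n$, a dense sequence together with Definition \ref{def:TZA} gives the maps $\varphi_n$ (making property (2) routine), and property (1) is reduced, via the order zero structure theorem and the equality $\sum_{i=1}^k\varphi_n(e_{i,i})^m=\varphi_n(1)^m$, to comparing $1-\varphi_n(1)^m$ with $c_n$ through $d_\tau$. The only (harmless) deviation is the last step, where the paper uses the Cuntz equivalence $1-z^m\sim 1-z$ in $C([0,1])$ to get $d_\tau\bigl(1-\varphi_n(1)^m\bigr)=d_\tau\bigl(1-\varphi_n(1)\bigr)$, while you use the elementary bound $1-h^m\leq m(1-h)$; both give the required uniform estimate, and your unitary-conjugation argument merely fills in the step the paper declares clear.
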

\begin{proof}
Using Lemma \ref{lemma:sc} we can find a sequence of non-zero positive contractions $c_n\in \A$ such that
$$
\lim_{n\to\infty} \max_{\tau\in T(\A)} d_\tau(c_n) = 0.
$$ 
Let $(a_n)_{\in\N}\subseteq \A$ be a dense sequence. We will denote $F_n=\{a_1,\ldots,a_n\}$. For each $n$ we can now find a c.p.c. order zero map $\varphi_n:M_k \to \A$ such that
$$
\|[F_n,\varphi_n]\|\leq 1/n
$$ and 
$$
1 - \varphi_n(1) \precsim c_n.
$$
Clearly property (\ref{property:2}) holds, so it remains to see that the maps $\varphi_n$ satisfy (\ref{property:1}). It is clear that that $\tau(\varphi_n(e_{1,1})^m) = \tau(\varphi_n(e_{i,i})^m)$ for any trace $\tau$, so it suffices to show that
$$1 - \max_{\tau \in T(\A)} \tau\left(\sum_{i=1}^k \varphi_n(e_{i,i})^m\right) =\min_{\tau \in T(\A)} \tau\left(1 - \sum_{i=1}^k \varphi_n(e_{i,i})^m\right) \underset{n\to\infty}{\longrightarrow} 0.$$
We claim that 
$$1 - \sum_{i=1}^k \varphi_n(e_{i,i})^m \sim 1 - \varphi_n(1)$$
for all $n$. To see this, fix $n$, and note that by functional calculus we have a homomorphism $\pi:C([0,1])\to \A$ given by
$$\pi(f) = f(\varphi_n(1)).$$
Letting $z$ denote the identity function on $[0,1]$, a simple calculation yields:
\begin{align*}
\pi (1-z^m) &= 1 - \sum_{i=1}^k \varphi_n(e_{i,i})^m, & \pi(1-z) = 1-\varphi_n(1)
\end{align*}
Since homomorphisms of \Csalg s induce maps on the level of the Cuntz semigroup, our claim now follows from the simple fact that $1-z^m \sim 1-z$ in $C([0,1])$.
 
We thus have 
\begin{align*}
0\leq \tau\left(1 - \sum_{i=1}^k \varphi_n(e_{i,i})^m\right) &\leq d_\tau\left(1 - \sum_{i=1}^k \varphi_n(e_{i,i})^m\right)\\& = d_\tau(1-\varphi(1)) \\ & \leq d_\tau(c_n)   
\end{align*}
and since $\displaystyle \lim_{n\to\infty} \max_{\tau\in T(\A)} d_\tau(c_n) = 0$, we have that (\ref{property:1}) holds.
\end{proof}

\begin{proof}[Proof of Theorem \ref{thm:tzaimpliesza}]
By Theorem \ref{thm:almostunperf} we know that $\A$ has strict comparison. Thus, if $\A$ is traceless then $\A$ is purely infinite. By Kirchberg's $\mathcal{O}_\infty$ absorption Theorem $\A\cong\A\otimes\mathcal{O}_\infty$ so by \cite{winter2} we are done.

Let us then assume that $T(\A)\neq\emptyset$. This implies that $\A$ is stably finite so the conclusion of Lemma \ref{lemma:tracialalmostdivisibility} holds. We now apply Theorem \ref{thm:ms} to complete the proof.
\end{proof}

\section{Finite group actions}
\label{section:finite-group}

We begin this section with a general lemma that we will need in this section and the following. We will only use it for actions of finite groups or of the integers.

\begin{Lemma}
\label{lemma:crossedproductcuntzdomination}
Let $\alpha:G\to \aut(\A)$ be an action of a discrete group $G$ on a simple, unital \Csalg~ $\A$. Suppose that $\alpha_g$ is outer for all $g\in G\setminus\{1\}$. Then for every non-zero positive element $a\in \A\rtimes_{\alpha,r} G$ in the reduced crossed product there exists a non-zero positive element $b\in \A$ such that $b\precsim a$.
\end{Lemma}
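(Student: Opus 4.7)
The plan is to exploit the canonical faithful conditional expectation $E:\A\rtimes_{\alpha,r}G\to \A$ together with a Kishimoto-type averaging argument made available by the outerness hypothesis. Since $a\geq 0$ is nonzero and $E$ is faithful on positive elements, $h:=E(a)$ is a nonzero positive element of $\A$. The strategy is to produce a positive contraction $c\in \A$ for which $cac$ is norm-close to $chc$, then cut down by a suitable $\delta>0$ to extract a nonzero positive element of $\A$ Cuntz-dominated by $a$.

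First, I would fix $\eps_0>0$ small enough that $h_0:=(h-\eps_0)_+$ is nonzero, and confine attention to the hereditary subalgebra $\B:=\overline{h_0\A h_0}$. Any positive contraction $c\in \B$ will satisfy $chc\geq \eps_0\, c^2$ by functional calculus (in $\A^{**}$, $h$ dominates $\eps_0$ on the spectral projection $\chi_{[\eps_0,\|h\|]}(h)$, which in turn dominates the support of $c$), so whenever $\|c\|=1$ one obtains the uniform lower bound $\|chc\|\geq \eps_0$. This is the bound I will need at the final step.

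Next, approximate $a$ in norm by a finite sum $y=\sum_{g\in F}a_gu_g$ with $e\in F$ and $a_e=E(y)$ close to $h$. Using the outerness of $\alpha$ on the simple \Csalg~ $\A$, Kishimoto's lemma supplies, for any $\eta>0$, a positive contraction $c\in \B$ with $\|c\|=1$ satisfying $\|c\,a_g\,\alpha_g(c)\|<\eta$ for every $g\in F\setminus\{e\}$. Applying the commutation rule $u_gc=\alpha_g(c)u_g$ and using that each $u_g$ is a unitary in the reduced crossed product, a direct computation yields
$$\|cyc-c\,a_e\,c\|\leq \sum_{g\in F\setminus\{e\}}\|c\,a_g\,\alpha_g(c)\| < |F|\eta;$$
combining this with the norm approximations $\|a-y\|$ and $\|a_e-h\|$ delivers $\|cac-chc\|<\delta$ for any preassigned $\delta>0$.

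Finally, choose $\delta<\eps_0/2$ and invoke the standard Cuntz-comparison fact that $\|X-Y\|<\delta$ for positive $X,Y$ implies $(Y-\delta)_+\precsim X$; this gives $(chc-\delta)_+\precsim cac\precsim a$, and $\|chc\|\geq \eps_0>\delta$ ensures $b:=(chc-\delta)_+\in \A_+$ is nonzero, completing the argument. The main technical input is Kishimoto's lemma, which I will cite as a classical result for outer actions of discrete groups on simple \Csalg s; the rest is essentially bookkeeping and treats the finite-group and integer cases needed later on a uniform footing.
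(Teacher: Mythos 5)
Your argument is correct and follows essentially the same route as the paper's proof: approximate $a$ by a finite sum of Fourier terms, use Kishimoto's Lemma 3.2 to find a norm-one positive element cutting the off-diagonal coefficients down to something small, and conclude with R{\o}rdam's fact that $\|X-Y\|<\delta$ implies $(Y-\delta)_+\precsim X$. The only (harmless) variation is how nonvanishing of the resulting element is secured: the paper invokes the clause $\|xa_1x\|>\|a_1\|-\eps$ of Kishimoto's lemma, whereas you localize the cutting element in the hereditary subalgebra $\overline{(E(a)-\eps_0)_+\A(E(a)-\eps_0)_+}$ and use $chc\geq\eps_0c^2$, which requires the (standard) form of Kishimoto's lemma allowing the element to be chosen in a prescribed hereditary subalgebra.
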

\begin{proof}
Let $E:\A\rtimes_{\alpha, r} G \to \A$ be the canonical faithful conditional expectation. By replacing $a$ with $\|E(a)\|^{-1} a$ we may assume that $\|E(a)\|=1$. We can find a finite sum $a^\prime = \sum_{k=1}^n a_ku_{g_k} \in \A\rtimes_{\alpha, r} G$ such that $\|a-a^\prime\|<\frac{1}{3}$ where $u_g$ is the canonical unitary implementing $\alpha_g$ for $g\in G$. We can assume that $a^\prime$ is positive and non-zero, and we label the indices such that $g_1 = 1_G$. Note that $a_1$ is a non-zero positive element in $\A$ because $a_1=E(a^\prime)$. Since $E$ is a contractive map, we have $$\|a_1\| \geq 1-\frac{1}{3} = \frac{2}{3}.$$
Let $0<\eps<\frac{1}{3(n+1)}$. By Lemma 3.2 of \cite{kishimoto} we can find a positive element $x\in\A$ with $\|x\|=1$ such that
\begin{align*}
\|xa_1x\|&>\|a_1\|-\eps \geq \frac{2}{3} - \eps, & \|xa_{g_k}\alpha_{g_k}(x)\|&<\eps, \quad \forall k\neq 1.
\end{align*}
This implies that for $k\neq 1$ we have
$$\|xa_{g_k}u_{g_k}x\| = \|xa_{g_k}\alpha_{g_k}(x)u_{g_k}\| = \|xa_{g_k}\alpha_{g_k}(x)\| < \eps.$$
We deduce that $\|xa^\prime x-xa_1x\|< n\eps$ and thus
$$\|xax-xa_1x\|<n\eps+\frac{1}{3}.$$
Let $b=(xa_1x-(n\eps+\frac{1}{3}))_+$, note that $b$ is a non-zero positive element by choice of $\eps$ and we have
$$b\precsim xax \precsim a$$
where the first Cuntz subequivalence follows from \cite{rordamuhf2}, Proposition 2.2.
\end{proof}

\begin{Def}\label{def:generalizedtracialrokhlinproperty}
If $\alpha:G\to \aut(\A)$ is an action of a finite group $G$ on a simple, unital \Csalg~ $\A$, then $\alpha$ is said to have the \emph{generalized tracial Rokhlin property} if for any $\eps>0$, any finite subset $F\subseteq \A$, and any non-zero positive element $a\in \A$ there exist normalized positive contractions $\{e_g\}_{g\in G} \subseteq \A$ such that:
\begin{enumerate}
\item $e_g\perp e_g$ when $g\neq h$.
\item $1-\sum_{g\in G} e_g \precsim a$.
\item $\|[e_g,y]\|<\eps$ for all $g\in G, y\in F$
\item $\|\alpha_g(e_h)-e_{gh}\|<\eps$ for all $g,h\in G$
\end{enumerate}
\end{Def}

We first collect a few basic properties of such actions.
\begin{Prop}
Let $\alpha:G\to \aut(\A)$ be an action of a finite group $G$ on a simple, unital \Csalg~ $\A$. If $\alpha$ has the generalized tracial Rokhlin property then $\alpha_g$ is outer for all $g\in G\setminus\{1\}$.
\end{Prop}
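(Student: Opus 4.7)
The plan is a short contradiction argument in the same spirit as the standard proof that the (tracial) Rokhlin property forces outerness. Suppose there exist $g_0 \in G \setminus \{1\}$ and a unitary $u \in \A$ with $\alpha_{g_0} = \ad(u)$. Fix some $\eps \in (0, 1/2)$, pick any nonzero positive element $a \in \A$ (the Cuntz-domination condition will play no role), and apply the generalized tracial Rokhlin property to the finite set $F = \{u\}$ with these data. This produces pairwise orthogonal norm-one positive contractions $\{e_h\}_{h \in G}$ satisfying $\|[e_h, u]\| < \eps$ and $\|\alpha_{g_0}(e_h) - e_{g_0 h}\| < \eps$ for every $h \in G$.

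Next, since $\alpha_{g_0} = \ad(u)$ and $u$ is unitary, the second of these estimates is equivalent to $\|u e_h - e_{g_0 h} u\| < \eps$. The first gives $\|u e_h - e_h u\| < \eps$. Combining the two via the triangle inequality and then cancelling $u$ on the right (once more using unitarity), one obtains $\|e_{g_0 h} - e_h\| < 2\eps$ for each $h \in G$.

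To derive a contradiction I would fix any $h$ and note that $g_0 h \neq h$, so the Rokhlin property ensures $e_h \perp e_{g_0 h}$. Thus $e_h$ and $e_{g_0 h}$ are orthogonal positive contractions of norm one. Orthogonal positive elements commute, and inside the (commutative) $C^*$-subalgebra they generate one computes $(e_h - e_{g_0 h})^2 = e_h^2 + e_{g_0 h}^2$; since this is a sum of two orthogonal positive elements of norm one, its norm is one. Therefore $\|e_h - e_{g_0 h}\| = 1$, contradicting $\|e_h - e_{g_0 h}\| < 2\eps < 1$.

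There is no genuine obstacle: the only points requiring care are correctly converting the approximate equivariance condition into a norm estimate on $u e_h - e_{g_0 h} u$, combining it with near-commutation via the triangle inequality, and recalling that two orthogonal norm-one positive contractions are automatically at distance exactly one.
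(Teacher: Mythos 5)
Your argument is correct and is essentially the same as the paper's: both derive a contradiction by combining the near-commutation of the $e_h$ with $u$, the approximate equivariance $\|\alpha_{g_0}(e_h)-e_{g_0h}\|<\eps$, and the fact that two orthogonal normalized positive elements are at distance exactly $1$ (the paper does this with $h=1$ and $\eps=1/2$, you with general $h$ and $\eps<1/2$, which is an immaterial difference). No gaps.
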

\begin{proof}
Let $g\in G$ and assume that $\alpha_g = \ad(u)$ for some $u\in \A$.
Let $\{e_g\}_{g\in G}$ be elements as in Definition \ref{def:generalizedtracialrokhlinproperty} taking $\eps=1/2$, $a=1_\A$, and $F=\{u\}$.
We have
\begin{align*}
1 &= \|e_1 - e_g\| \\
&< \|ue_1u^*-e_g\| + \frac{1}{2} \\
&= \|\alpha_g(e_1) - e_g\| + \frac{1}{2} \\
&< \frac{1}{2} + \frac{1}{2} \\
&= 1
\end{align*}
which is a contradiction.
\end{proof}

A special case of Theorem 3.1 of \cite{kishimoto} states that if $\alpha:G\to \aut(\A)$ is an action of a discrete group on a simple \Csalg~ such that $\alpha_g$ is outer for all $g\neq 1$ then $\A\rtimes_{\alpha,r} G$ is simple.
\begin{Cor}
Let $\alpha:G\to \aut(\A)$ be an action of a finite group $G$ on a simple, unital \Csalg~ $\A$. If $\alpha$ has the generalized tracial Rokhlin property then $\A\rtimes_\alpha G$ is simple. 
\end{Cor}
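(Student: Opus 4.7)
The proof of this corollary is essentially a two-line combination of the preceding Proposition and the cited result of Kishimoto. The plan is as follows.

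First, I would invoke the preceding Proposition: the generalized tracial Rokhlin property implies that $\alpha_g$ is outer for every $g \in G \setminus \{1\}$. Next, I would apply Kishimoto's theorem (the special case of Theorem 3.1 of \cite{kishimoto} recalled just before the corollary): for a discrete group action on a simple $C^*$-algebra with all nontrivial automorphisms outer, the reduced crossed product $\A \rtimes_{\alpha,r} G$ is simple. Finally, since $G$ is finite (hence amenable), the full and reduced crossed products coincide, so $\A \rtimes_\alpha G = \A \rtimes_{\alpha,r} G$ is simple.

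There is essentially no obstacle here; the real content is packaged in the previous Proposition (outerness from the Rokhlin-type condition) and in Kishimoto's theorem, both of which I am allowed to quote. The only minor point worth mentioning explicitly in the write-up is the identification of the full and reduced crossed products in the finite-group setting, so that the cited simplicity result applies to $\A \rtimes_\alpha G$ as stated.
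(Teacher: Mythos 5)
Your proposal is correct and follows exactly the route the paper intends: the preceding Proposition gives outerness of $\alpha_g$ for $g\neq 1$, and the quoted special case of Kishimoto's Theorem 3.1 then yields simplicity of the reduced crossed product, which coincides with the full crossed product since $G$ is finite. Your explicit remark about identifying the full and reduced crossed products is a point the paper leaves implicit, but it is the same argument.
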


\begin{Lemma} \label{lemma:approximatelyinvarianttza}
Let $\A$ be a simple, separable, unital \TZA~ \Csalg~ and let $\alpha:G\to \aut(\A)$ be an action of a finite group $G$ on $\A$. Assume that $\alpha$ has the generalized tracial Rokhlin property. Then for any finite set $F \subset \A$, $\eps>0$ and non-zero positive element $a \in \A$ and $n \in \N$ there is a c.p.c. order zero map $\psi:M_n \to \A$ such that:
\begin{enumerate}
\item $1-\psi(1) \precsim a$. 
\item For any normalized element $x \in M_n$ and any $y\in F$ we have $\|[\psi(x),y]\| < \eps$.
\item For any normalized element $x \in M_n$ and any $g\in G$ we have $\|\alpha_g(\psi(x))-\psi(x)\| < \eps$.
\end{enumerate}
\end{Lemma}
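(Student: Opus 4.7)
The plan is: use the generalized tracial Rokhlin property to obtain an approximate partition $\{e_g\}_{g\in G}$, use \TZA{} to produce a c.p.c.\ order zero $\varphi:M_n\to\A$ with $1-\varphi(1)$ Cuntz-small and $\varphi$ approximately commuting with $F\cup\{e_g\}_{g\in G}\cup\bigcup_{h\in G}\alpha_h(F)$, average to
$$\tilde\psi(x) \;=\; \sum_{g\in G} e_g^{1/2}\,\alpha_g(\varphi(x))\,e_g^{1/2},$$
and then perturb to an exact order zero map. The parameters are chosen as follows: by Theorem \ref{thm:almostunperf} and Lemma \ref{lemma:sc}, I first fix a non-zero positive contraction $a_0\in\A$ with $a_0\oplus\bigoplus_{g\in G}\alpha_g(a_0)\precsim a$ in $M_{|G|+1}(\A)$ (trivial in the purely infinite case). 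Then I apply Definition \ref{def:generalizedtracialrokhlinproperty} with $F_1=F$, positive element $a_0$, tolerance $\eps_1>0$ (to be fixed small) to obtain $\{e_g\}_{g\in G}$, and apply Definition \ref{def:TZA} with $F_2=F\cup\{e_g\}_{g\in G}\cup\bigcup_{h\in G}\alpha_h(F)$, positive element $a_0$, tolerance $\eps_2>0$ (small) to obtain $\varphi$.

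The approximate commutation of $\tilde\psi$ with $F$ reduces to the commutation of $\varphi$ with $\alpha_{g^{-1}}(y)\in F_2$, while approximate $\alpha$-invariance follows by reindexing $h\mapsto g_0 h$ in the defining sum, using $\|\alpha_{g_0}(e_h)-e_{g_0 h}\|<\eps_1$. The approximate order zero relation is checked by noting that when $p,q\in M_n$ are normalized with $pq=0$, the off-diagonal terms ($g\neq h$) in $\tilde\psi(p)\tilde\psi(q)$ vanish \emph{exactly} thanks to $e_g^{1/2}e_h^{1/2}=0$, while the diagonal terms
$$e_g^{1/2}\alpha_g(\varphi(p))\,e_g\,\alpha_g(\varphi(q))\,e_g^{1/2}$$
are small because $\alpha_g(\varphi(p))$ approximately commutes with $e_g$ (equivalently, $\varphi(p)$ with $\alpha_{g^{-1}}(e_g)\approx e_1\in F_2$), so these terms are close to $e_g^{3/2}\alpha_g(\varphi(p)\varphi(q))e_g^{1/2}=0$. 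For~(1), the decomposition
$$1-\tilde\psi(1) \;=\; \Bigl(1-\sum_g e_g\Bigr)+\sum_g e_g^{1/2}\bigl(1-\alpha_g(\varphi(1))\bigr)e_g^{1/2}$$
exhibits the second summand as a sum of \emph{exactly} pairwise orthogonal positives (again via $e_g^{1/2}e_h^{1/2}=0$), hence Cuntz-equivalent to $\bigoplus_g e_g^{1/2}(1-\alpha_g(\varphi(1)))e_g^{1/2}\precsim\bigoplus_g\alpha_g(a_0)$; combined with $1-\sum_g e_g\precsim a_0$ this gives $1-\tilde\psi(1)\precsim a$.

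The main obstacle is that $\tilde\psi$ is only \emph{approximately} c.p.c.\ order zero rather than exactly so. Choosing $\eps_1,\eps_2$ small enough --- and using Lemma \ref{lemma:orderzerofunctionalcalculus} to propagate commutator bounds through continuous functional calculus for $e_g^{1/2}$ and $\varphi$ --- forces $\|\tilde\psi(p)\tilde\psi(q)\|$ below any prescribed threshold for normalized $p,q\in M_n$ with $pq=0$. A standard perturbation result for almost-order-zero c.p.c.\ maps (in the spirit of \cite{winter-zacharias2}) then produces a genuine c.p.c.\ order zero $\psi:M_n\to\A$ in arbitrary norm-proximity of $\tilde\psi$; conditions (2) and (3) transfer to $\psi$ by the triangle inequality, and (1) persists because Cuntz sub-equivalence is robust under small norm perturbations. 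An alternative and perhaps cleaner route is to perform the averaging inside $\A_\infty$: Lemma \ref{lemma:centralsequencealgebra} together with a reindexation argument supplies a $\tilde\varphi$ commuting exactly with $\A$ and with the lifted Rokhlin elements, whence the averaged map is exactly order zero and exactly $\alpha$-invariant in $\A_\infty$, from which $\psi$ is extracted via semiprojectivity of $CM_n$.
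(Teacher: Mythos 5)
Your overall strategy is the same as the paper's: average an order zero map coming from tracial $\Zh$-absorption over the Rokhlin elements and control the error. The difference is where the averaging happens. You work directly in $\A$, take $\tilde\psi(x)=\sum_g e_g^{1/2}\alpha_g(\varphi(x))e_g^{1/2}$, and then invoke a stability result to repair the (only approximately) order zero map; the paper instead takes $\varphi$ from Lemma \ref{lemma:centralsequencealgebra} with values in $\A_\infty\cap\A'$, so that the average $\hat\psi(x)=\sum_g e_g\bar\alpha_g(\varphi(x))$ is \emph{exactly} order zero in $\A_\infty$ (the $e_g$ are orthogonal and commute exactly with $\bar\alpha_g(\varphi(x))$), and then lifts by projectivity of $CM_n$. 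Up to the final perturbation/lifting step your estimates are essentially sound: the commutation with $F$, the reindexing argument for approximate $\alpha$-invariance, and the exact decomposition of $1-\tilde\psi(1)$ together with the trace-rank choice of $a_0$ and strict comparison (Theorem \ref{thm:almostunperf}) all parallel the paper's computations. The quantitative stability of the order zero relations that you appeal to does exist (weak stability of the cone relations, in the spirit of \cite{winter-zacharias2} and \cite{loring}), though you should cite it precisely.

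The genuine gap is the last sentence about condition (1): it is \emph{not} true that Cuntz subequivalence is robust under small norm perturbations. If $\|\psi(1)-\tilde\psi(1)\|<\delta$, then Proposition 2.2 of \cite{rordamuhf2} only gives
\begin{equation*}
\bigl(1-\psi(1)-\delta\bigr)_+ \;\precsim\; 1-\tilde\psi(1)\;\precsim\; a,
\end{equation*}
and the uncut element $1-\psi(1)$ may pick up a small spectral tail that $a$ (an arbitrary non-zero positive element, possibly of very small rank) does not dominate; no choice of ``slack'' on the right-hand side fixes this, because the defect is on the left. This is precisely the point where the paper does extra work: after lifting it only records $(1-\psi_{\ell_0}(1)-1/2)_+\precsim a$ (using a witness $v\in\A_\infty$ with $v^*av=(1-\hat\psi(1)-1/4)_+$), and then replaces $\psi_{\ell_0}$ by $\psi=h[\psi_{\ell_0}]$ with $h(t)=\min(2t,1)$, so that by functional calculus $1-\psi(1)=2(1-\psi_{\ell_0}(1)-1/2)_+\precsim a$ holds exactly, while Lemmas \ref{lemma:orderzerofunctionalcalculus} and \ref{lemma:orderzerofunctionalcalculus2} preserve conditions (2) and (3) under $h[\,\cdot\,]$. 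Your argument needs the same (or an equivalent) cut-down device after the perturbation step. The same remark applies to your alternative route through $\A_\infty$: extracting $\psi$ back in $\A$ hits the identical issue, and in addition the claimed \emph{exact} $\alpha$-invariance in $\A_\infty$ is not automatic from Definition \ref{def:generalizedtracialrokhlinproperty} --- you would first have to replace the Rokhlin elements by exactly permuted, exactly orthogonal ones in $\A_\infty$ and make $\varphi$ commute with them via a diagonal/reindexing argument, which your sketch does not supply.
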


\begin{proof}
Let $F,a,\eps,n$ be given as in the statement of the lemma. Since $\A$ is simple and unital, we can find $c>0$ such that $d_\tau(a)\geq c$ for all $\tau\in T(\A)$. Use Lemma \ref{lemma:sc} to find a positive element $b\in \A$ such that $d_\tau(b)<\frac{c}{|G|+2}$ for all $\tau\in T(\A)$. 
Since $\tau \circ \alpha_g$ is also a trace, we have that
\begin{align}\label{eq:4.2} d_\tau(\alpha_g(b))<\frac{c}{|G|+2}\end{align}
for all $g\in G$ and $\tau\in T(\A)$.

Let $\eta>0$ be as in Lemma \ref{lemma:orderzerofunctionalcalculus} such that $\|[h[\psi],y]\|<\eps/2$ whenever $\psi:M_k\to\A$ is a c.p.c order zero map and $y\in \A$ is a contraction such that $\|[\psi,y]\|<\eta$ where $h\in C_0((0,1])$ is given by
$$h(x)=\begin{cases} 2t, & t<1/2 \\ 1, & t\geq 1/2 \end{cases}.$$
Let $(e_g)_{g\in G}$ be positive contractions such that
\begin{enumerate}
\item $\|[e_g,y]\|<\eta$ for all $g\in G$ and for all $y\in F$.
\item $\|\alpha_g(e_h) - e_{gh}\|< \frac{\eta}{|G|}$.
\item $1-\sum_{g\in G} e_g \precsim b$
\end{enumerate}
as guaranteed by the generalized tracial Rokhlin property.

By Lemma \ref{lemma:centralsequencealgebra} we can find a c.p.c. order zero map $\varphi:M_n\to \A_\infty\cap \A^\prime$ such that $1-\varphi(1)\precsim b$.
We denote by $\bar{\alpha}_g$ the automorphism of $\A_{\infty}$ induced by $\alpha_g$, and define $\hat\psi:M_n\to \A_\infty$ by
$$
\hat\psi(x) = \sum_{g\in G} e_g\bar{\alpha}_g(\varphi(x)).
$$
$\hat{\psi}$ is clearly an order zero c.p.c. map.
 
First we remark that $\A_{\infty}\cap\A^\prime$ is invariant under $\bar{\alpha}$. Thus we have 
$$\hat\psi(x)y = \sum_{g\in G} e_g\alpha_g(\varphi(x))y \approx_{\eta} \sum_{g\in G} ye_g\alpha_g(\varphi(x)) = y\hat\psi(x)$$
for all $y\in F$. This is the first property we will be needing.

We now consider $\bar{\alpha}_g(\hat\psi(x))$ for normalized $x\in M_n$:
\begin{align*}
\bar{\alpha}_g(\hat\psi(x)) & = \bar{\alpha}_g\left(\sum_{h\in G}e_h\bar{\alpha}_h(\varphi(x))\right) \\
& \approx_{\eps/2} \sum_{h\in G}e_{gh} \bar{\alpha}_{gh}(\varphi(x)) \\
& = \sum_{g\in G}e_{g} \bar{\alpha}_{g}(\varphi(x)) = \hat\psi(x)
\end{align*}

The third and last property we will be needing of $\hat\psi$ is:
\begin{align*}
1-\hat\psi(1) & = \left(1-\sum_{g\in G} e_g\right) + \left(\sum_{g\in G} e_g - \hat\psi(1)\right) \\
& = \left(1 - \sum_{g\in G} e_g\right) + \sum_{g\in G} e_g^{1/2} \bar{\alpha}_g(1-\varphi(1))e_g^{1/2} \\
& \precsim b \oplus \bigoplus_{g\in G} \alpha_g(b) \precsim a
\end{align*}
where the last Cuntz-subequivalence follows from (\ref{eq:4.2}) and Theorem \ref{thm:almostunperf}. By \cite{rordamuhf2} Proposition 2.4, we can find $v\in\A_\infty$ such that $v^*av = (1-\hat\psi(1)-1/4)_+$.

We now use liftability of order zero maps with finite dimensional domains to lift $\hat\psi$ to a sequence of order zero maps $\psi_\ell:M_k\to \A$. Let $(v_\ell)_{\ell\in\N}\in\prod_\N \A$ be a representative of $v$. We have $$\|v_\ell^*av_\ell-(1-\psi_\ell(1)-1/4)_+\|\stackrel{\ell\to\infty}{\longrightarrow} 0.$$
The properties of $\hat\psi$ that we have established imply that for $\ell_0\in \N$ large enough, the following properties hold:
\begin{enumerate}
\item \label{property4.1} $\|[\psi_{\ell_0},y]\|<\eta$ for all $y\in K$,
\item \label{property4.2} $\|\alpha_g(\psi_{\ell_0}(x)) - \psi_{\ell_0}(x)\|<\eta$ for all normalized elements $x\in M_k$,
\item \label{property4.3} $\|v_{\ell_0}^*av_{\ell_0}-(1-\psi_{\ell_0}(1)-1/4)_+\|<1/4$,
\end{enumerate}
By Proposition 2.2 of \cite{rordamuhf2}, the third property above implies that
$$(1-\psi_{\ell_0}(1)-1/2)_+ = ((1-\psi_{\ell_0}(1)-1/4)_+-1/4)_+ \precsim v_{\ell_0}^*av_{\ell_0} \precsim a.$$
Define $\psi = h[\psi_{\ell_0}]$. By functional calculus, one checks that
$$1-\psi(1) = 2(1-\psi_{\ell_0}(1)-1/2)_+$$
which gives
$$1-\psi(1)\precsim a.$$
Property (\ref{property4.2}) above together with \ref{lemma:orderzerofunctionalcalculus2} ensures that $\|\alpha_g(\psi(x)) - \psi(x)\|<\eps/2<\eps$ for all normalized $x\in M_k$ and $g\in G$. This shows that $\psi$ meets our requirements.
\end{proof}

We are now ready to prove the main theorem of this section.

\begin{Thm}\label{thm:finitegroupaction}
Let $\A$ be a simple unital \Csalg~ and let $\alpha:G\to \aut(\A)$ be an action of a finite group on $\A$. Assume that $\alpha$ has the generalized tracial Rokhlin property. If $\A$ is \TZA~ then $\A\rtimes_\alpha G$ is also \TZA.
\end{Thm}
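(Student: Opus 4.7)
The plan is to reduce the problem to producing a suitable order zero map from $M_n$ into $\A$ itself, and then to view it as a map into $\A\rtimes_\alpha G$ via the canonical inclusion. The work has essentially already been done in Lemma \ref{lemma:approximatelyinvarianttza}; what remains are the reductions.

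Let $F\subseteq\A\rtimes_\alpha G$ be a finite set, $\eps>0$, $n\in\N$ and $a\in \A\rtimes_\alpha G$ a non-zero positive element. First I would reduce the positive element: since $\alpha$ has the generalized tracial Rokhlin property each $\alpha_g$ ($g\neq 1$) is outer, so by Lemma \ref{lemma:crossedproductcuntzdomination} there is a non-zero positive $b\in\A$ with $b\precsim a$ in $\A\rtimes_\alpha G$. It will then suffice to arrange $1-\varphi(1)\precsim b$ in $\A\rtimes_\alpha G$, which will in fact be deduced from Cuntz subequivalence in $\A$ (and this passes to the crossed product trivially).

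Next I would reduce the finite set $F$. Every element of $\A\rtimes_\alpha G$ is within an arbitrary norm tolerance of a finite sum $\sum_{g\in G} a_g u_g$. Thus, up to enlarging $F$ and shrinking $\eps$, I may assume that $F=F_0\cup\{u_g\}_{g\in G}$ for some finite $F_0\subseteq\A$. Here the unitaries $u_g\in\A\rtimes_\alpha G$ implement $\alpha_g$.

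Now I would apply Lemma \ref{lemma:approximatelyinvarianttza} with the data $F_0$, $\eps$, $b$, and $n$, obtaining a c.p.c. order zero map $\psi:M_n\to\A$ which (i) satisfies $1-\psi(1)\precsim b$ in $\A$, (ii) approximately commutes with $F_0$ within $\eps$, and (iii) is approximately $\alpha$-invariant, in the sense that $\|\alpha_g(\psi(x))-\psi(x)\|<\eps$ for every normalized $x\in M_n$ and every $g\in G$. Composing with the inclusion $\A\hookrightarrow \A\rtimes_\alpha G$ yields $\varphi:M_n\to\A\rtimes_\alpha G$, and I claim this is the map we want. Commutation with elements of $F_0$ is inherited. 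Commutation with $u_g$ follows from (iii): for a normalized $x\in M_n$ we compute
\[
\varphi(x)u_g - u_g\varphi(x) \;=\; u_g\bigl(\alpha_g^{-1}(\varphi(x))-\varphi(x)\bigr),
\]
which has norm less than $\eps$. Finally, $1-\varphi(1)\precsim b$ holds in $\A$, hence in $\A\rtimes_\alpha G$, and $b\precsim a$, so $1-\varphi(1)\precsim a$.

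The only potential obstacle is the reduction in the first paragraph: one must know that Cuntz domination in the crossed product can be pulled back to $\A$, which is exactly the content of Lemma \ref{lemma:crossedproductcuntzdomination}. With that in hand, the theorem reduces to Lemma \ref{lemma:approximatelyinvarianttza} and a routine $\eps/3$ argument to handle elements of $F$ of the form $\sum a_gu_g$.
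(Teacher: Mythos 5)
Your proposal is correct and takes essentially the same route as the paper: reduce the positive element to a non-zero positive $b\in\A$ via Lemma \ref{lemma:crossedproductcuntzdomination}, then apply Lemma \ref{lemma:approximatelyinvarianttza} and use the approximate $\alpha$-invariance to get approximate commutation with the canonical unitaries $u_g$, hence with all of $F$. You merely spell out the routine approximation and reduction steps that the paper's two-line proof leaves implicit.
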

\begin{proof}
Lemma \ref{lemma:approximatelyinvarianttza} implies that we can always find c.p.c. order zero maps as in Definition \ref{def:TZA} provided that the positive element $a$ is taken from $\A$. By Lemma \ref{lemma:crossedproductcuntzdomination} we may always assume without loss of generality that $a \in \A$ (otherwise replace it by a non-zero positive element from $\A$ which it dominates).
\end{proof}

Putting together Theorems \ref{thm:tzaimpliesza} and \ref{thm:finitegroupaction} we obtain the following partial generalization of 
 Corollary 2.4 of \cite{hirshberg-winter}. 

\begin{Cor}
\label{cor:permanence-Z-finite-group}
Let $\A$ be a simple, separable, unital, nuclear \Csalg~ and let $\alpha:G\to \aut(\A)$ be an action of a finite group $G$ with the generalized tracial Rokhlin property. If $\A$ is $\Zh$ absorbing then so is $\A\rtimes_\alpha G$.
\end{Cor}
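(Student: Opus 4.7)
The proof is a straightforward assembly of the machinery already established in the paper. The plan is to chain the implications: $\Zh$-absorbing $\Rightarrow$ tracially $\Zh$-absorbing (the first proposition of Section~\ref{section:basic-properties}), then pass to the crossed product via Theorem~\ref{thm:finitegroupaction}, and finally apply Theorem~\ref{thm:tzaimpliesza} to upgrade tracial $\Zh$-absorption back to genuine $\Zh$-absorption for $\A \rtimes_\alpha G$. The hinge of the argument is that $\A \rtimes_\alpha G$ again satisfies the hypotheses of Theorem~\ref{thm:tzaimpliesza}, namely that it is simple, separable, unital, and nuclear.

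First I would invoke the first proposition of Section~\ref{section:basic-properties} to conclude that $\A$ is tracially $\Zh$-absorbing. Then Theorem~\ref{thm:finitegroupaction} directly yields that $\A \rtimes_\alpha G$ is tracially $\Zh$-absorbing, since $\alpha$ has the generalized tracial Rokhlin property by assumption.

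Next I would verify the structural hypotheses needed to feed $\A \rtimes_\alpha G$ into Theorem~\ref{thm:tzaimpliesza}. Unitality and separability are immediate since $G$ is finite and $\A$ is unital and separable. Nuclearity of the crossed product by a finite (indeed amenable) group is standard. Simplicity of $\A \rtimes_\alpha G$ follows from the corollary just before Lemma~\ref{lemma:approximatelyinvarianttza}: the generalized tracial Rokhlin property forces $\alpha_g$ to be outer for each nontrivial $g \in G$, and then Kishimoto's theorem (cited in the paper just before that corollary) gives simplicity of $\A \rtimes_{\alpha,r} G = \A \rtimes_\alpha G$.

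Finally, applying Theorem~\ref{thm:tzaimpliesza} to $\A \rtimes_\alpha G$ yields $\A \rtimes_\alpha G \cong (\A \rtimes_\alpha G) \otimes \Zh$, which is the desired conclusion. There is essentially no obstacle here beyond bookkeeping; the substantive work has already been done in Theorems~\ref{thm:tzaimpliesza} and \ref{thm:finitegroupaction}. If anything, the only point to be careful about is confirming that the corollary establishing simplicity of the crossed product applies in this setting, but this is immediate from the outerness consequence of the generalized tracial Rokhlin property noted above.
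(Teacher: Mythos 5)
Your proposal is correct and is essentially the paper's own argument: the corollary is obtained precisely by combining the proposition that $\Zh$-absorption implies tracial $\Zh$-absorption with Theorem \ref{thm:finitegroupaction} and then Theorem \ref{thm:tzaimpliesza}, with the same bookkeeping (simplicity via outerness and Kishimoto, nuclearity, separability, unitality of the crossed product) that you spell out.
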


As a non-trivial example of an action satisfying the generalized tracial Rokhlin property, we consider the symmetric group acting by permutation on the $n$-fold tensor power of $\Zh$. This example was studied in \cite{hw-permutations}, Corollary \ref{cor:permanence-Z-finite-group} gives a different proof of those results.

We recall the following special case of \cite{rordam} Theorem 2.1 (which originates in the paper of Jiang and Su in \cite{jiang-su}).

\begin{Thm}\label{thm:selectivetraceembedding}
Let $\tau$ denote the unique tracial state on $\Zh$. There exists a unital embedding $\psi:C([0,1]) \to \Zh$ such that 
$$\tau(\psi(f)) = \int_0^1 f(t)dt$$
\end{Thm}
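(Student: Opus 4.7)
The plan is to transfer the obvious embedding $C([0,1]) \hookrightarrow \Zh_{p,q}$ into a prime dimension drop algebra along a unital embedding $\Zh_{p,q} \hookrightarrow \Zh$.

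First, I would fix coprime integers $p,q \geq 2$ and recall the prime dimension drop algebra
$$\Zh_{p,q} = \{\, f \in C([0,1], M_p \otimes M_q) \,:\, f(0) \in M_p \otimes 1,\ f(1) \in 1 \otimes M_q \,\}.$$
This algebra is simple and carries a unique tracial state $\tau_{p,q}$, given explicitly by $\tau_{p,q}(g) = \int_0^1 \operatorname{tr}_{pq}(g(t))\,dt$, where $\operatorname{tr}_{pq}$ is the normalized trace on $M_{pq}$. There is an obvious unital injective $*$-homomorphism $\psi_0: C([0,1]) \to \Zh_{p,q}$ given by $\psi_0(f)(t) = f(t) \cdot 1_{M_p \otimes M_q}$; the boundary conditions are trivially satisfied since scalar multiples of the identity lie in both $M_p \otimes 1$ and $1 \otimes M_q$, and a direct calculation yields $\tau_{p,q}(\psi_0(f)) = \int_0^1 f(t)\,dt$.

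Next, I would invoke the existence of a unital $*$-homomorphism $\iota: \Zh_{p,q} \to \Zh$, which is part of Jiang and Su's original construction of $\Zh$ as an inductive limit of prime dimension drop algebras. Simplicity of $\Zh_{p,q}$ forces $\iota$ to be injective, so $\psi := \iota \circ \psi_0: C([0,1]) \to \Zh$ is a unital embedding.

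Finally, for the trace identity, I would appeal once more to uniqueness of the trace on $\Zh_{p,q}$: the functional $\tau \circ \iota$ is a tracial state on $\Zh_{p,q}$ and therefore must coincide with $\tau_{p,q}$. Hence
$$\tau(\psi(f)) \;=\; (\tau \circ \iota)(\psi_0(f)) \;=\; \tau_{p,q}(\psi_0(f)) \;=\; \int_0^1 f(t)\,dt,$$
which completes the argument. The only nontrivial ingredient is the existence of the unital embedding $\iota$, and this is built into the very definition of $\Zh$ rather than being a genuine obstacle.
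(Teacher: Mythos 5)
There is a genuine gap, and it sits exactly at the point you dismiss as free. The prime dimension drop algebra $\Zh_{p,q}$ is neither simple nor monotracial: evaluation at any interior point $t_0\in(0,1)$ is a surjection onto $M_p\otimes M_q$ with nonzero kernel, so $\Zh_{p,q}$ has many ideals, and its tracial states are in affine bijection with the Borel probability measures on $[0,1]$ (to each measure $\mu$ one assigns $g\mapsto \int_0^1 \mathrm{tr}_{pq}(g(t))\,d\mu(t)$; these are pairwise distinct already on the scalar-valued functions). Consequently the crucial step ``$\tau\circ\iota=\tau_{p,q}$ by uniqueness of the trace on $\Zh_{p,q}$'' is unjustified: for an arbitrary unital $*$-homomorphism $\iota:\Zh_{p,q}\to\Zh$, the state $\tau\circ\iota$ is merely \emph{some} trace on $\Zh_{p,q}$, i.e.\ corresponds to some probability measure on $[0,1]$ that depends on $\iota$, and there is no reason it should be Lebesgue measure. (The simplicity claim is also false, so injectivity of $\iota$ would need a separate argument, but that is the lesser problem.) In other words, the entire content of the theorem -- which the paper does not prove but quotes from R{\o}rdam's Theorem 2.1, going back to Jiang--Su -- is precisely the control of the induced measure, not the existence of a unital copy of $\Zh_{p,q}$ (equivalently of $C([0,1])$) inside $\Zh$.

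To repair the argument along your lines you would have to analyze the canonical inclusion of a building block into the Jiang--Su inductive limit: show that the measure on $[0,1]$ induced by restricting $\tau$ to the scalar functions is diffuse and has full support (this uses the specific form of the Jiang--Su connecting maps, whose eigenvalue patterns spread points out), and then precompose with the homeomorphism of $[0,1]$ given by the distribution function of that measure to convert it into Lebesgue measure. That reparametrization trick is standard, but the diffuseness/full-support step is genuine work and is exactly what your appeal to a (nonexistent) unique trace was meant to replace.
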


\begin{Cor}\label{approximatelycentralselectivetraceembedding}
Let $\tau$ denote the unique tracial state on $\Zh$, let $F\subseteq \Zh$ be a finite subset, and let $\eps>0$. There exists a unital embedding $\psi:C([0,1]) \to \Zh$ such that 
$$\tau(\psi(f)) = \int_0^1 f(t)dt$$ and $\|[y,\psi(f)]\|<\eps$ for all $y\in F$ and normalized $f\in C([0,1])$.
\end{Cor}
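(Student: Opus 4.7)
The strategy is to bootstrap the trace-preserving embedding of Theorem \ref{thm:selectivetraceembedding} up to an approximately central version by exploiting strong self-absorption of $\Zh$. First I would take the embedding $\psi_0:C([0,1])\to\Zh$ supplied by Theorem \ref{thm:selectivetraceembedding}, and fix an isomorphism $\Phi:\Zh\otimes\Zh\to\Zh$. Viewing $\psi_0$ as a map into the second tensor factor gives a new unital embedding $f\mapsto\Phi(1\otimes\psi_0(f))$ of $C([0,1])$ into $\Zh$. Since the unique trace on $\Zh\otimes\Zh$ is $\tau\otimes\tau$ and any isomorphism between $\Zh$ and $\Zh\otimes\Zh$ must intertwine the unique tracial states, this new embedding still satisfies the integral identity $\tau(\Phi(1\otimes\psi_0(f)))=(\tau\otimes\tau)(1\otimes\psi_0(f))=\int_0^1 f(t)\,dt$.

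The next step is to make the image approximately commute with $F$. Here I would invoke the defining feature of strongly self-absorbing algebras: the embedding $\iota:\Zh\to\Zh$, $\iota(a)=\Phi(a\otimes 1)$, is approximately unitarily equivalent to $\mathrm{id}_\Zh$. So, given $F\subseteq\Zh$ and $\eps>0$, one can choose a unitary $u\in\Zh$ with $\|u\Phi(a\otimes 1)u^*-a\|<\eps/2$ for every $a\in F$. Define
\[
\psi(f) \;=\; u\,\Phi(1\otimes\psi_0(f))\,u^*.
\]
This is still a unital embedding of $C([0,1])$ into $\Zh$, and trace preservation is preserved under conjugation by a unitary, so $\tau(\psi(f))=\int_0^1 f(t)\,dt$ remains valid.

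Finally, to check approximate centrality, observe that $1\otimes\psi_0(f)$ commutes with $a\otimes 1$ in $\Zh\otimes\Zh$, so $[\Phi(1\otimes\psi_0(f)),\Phi(a\otimes 1)]=0$; conjugating by $u$ and substituting $\Phi(a\otimes 1)=u^*au-(u^*au-\Phi(a\otimes 1))$ yields, for any normalized $f\in C([0,1])$ and $a\in F$,
\[
\|[\psi(f),a]\| \;=\; \|[u\Phi(1\otimes\psi_0(f))u^*,\,a]\| \;\leq\; 2\|\psi_0(f)\|\cdot\|u\Phi(a\otimes 1)u^*-a\| \;<\; \eps,
\]
which completes the proof. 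The only delicate point is citing strong self-absorption of $\Zh$ in the precise form needed — namely that the first-factor embedding into $\Zh\otimes\Zh$ is approximately unitarily equivalent to the identity — but this is standard and was used implicitly already in the theory of $\Zh$-absorption invoked earlier in the paper, so no real obstacle arises.
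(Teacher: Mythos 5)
Your proof is correct and takes essentially the same route as the paper: both embed $C([0,1])$ into the second tensor factor of $\Zh\otimes\Zh\cong\Zh$ and use strong self-absorption of $\Zh$ to arrange that the elements of $F$ lie within $\eps/2$ of the first factor, so that exact commutation in the tensor picture gives the $\eps$-commutation estimate. The only difference is cosmetic: where the paper asserts directly that the decomposition $\Zh\cong\Zh\otimes\Zh$ may be chosen with $F$ approximately contained in the unit ball of $\Zh\otimes 1$, you make the underlying approximate unitary equivalence explicit by conjugating the second-factor embedding with a unitary $u$.
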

\begin{proof}
We may assume that all elements of $F$ are of norm at most 1. Now, we may decompose $\Zh$ as $\Zh \cong \Zh \otimes \Zh$ such that each element $y \in F$ is close to within $\eps/2$ to an element $y'$ in the unit ball of $\Zh\otimes 1$.  Now choose a unital embedding $\psi_0:C([0,1]) \to \Zh$ as in Theorem \ref{thm:selectivetraceembedding}. Define $\psi:C([0,1]) \to \Zh \otimes \Zh \cong \Zh$ by $\psi(f) = 1 \otimes \psi_0(f)$. $\psi$ clearly satisfies the commutation requirement, and since the restriction of the unique trace $\tau$ on $\Zh \otimes \Zh$ to the second component $1 \otimes \Zh$ is the unique trace on that $C^*$-algebra, we see that $\tau(\psi(f))$ is indeed given by integrating $f$ against Lebesgue measure, as required.
\end{proof}

We thank Dawn Archey for a helpful conversation relating to the following example.
\begin{Exl}
Let $G=S_n$ be the symmetric group on the set $\{1,\ldots,n\}$, and let $\A = \Zh^{\otimes n} \cong \Zh$. Let $\alpha:G\to\aut(\A)$ be the action defined on elementary tensors by the formula 
$$\alpha_\sigma(z_1\otimes\cdots\otimes z_n) = z_{\sigma^{-1}(1)}\otimes\cdots\otimes z_{\sigma^{-1}(n)}$$
Let $\eps, F, a$ be as in Definition \ref{def:generalizedtracialrokhlinproperty}. We denote $\delta = d_\tau(a)$ where $\tau$ is the unique trace on $\A$. 
Let $$f_{1}\in C([0,1]^n)$$
be a continuous function of norm one, such that: \begin{enumerate}
\item $f_{1}$ is supported on the set
$X:=\{(x_1,\ldots,x_n)\in [0,1]^n \mid x_1<\ldots<x_n\}$,
\item $\lambda(\{x\in X \mid\  f_{id}(x) \neq 1\}) < \frac{\delta}{n!}$
\end{enumerate}
where $\lambda$ is Lebesgue measure on $[0,1]^n$. For $\sigma \in G$ we now define $f_\sigma \in C([0,1]^n)$ by
$$f_{\sigma}(x_1,\ldots,x_n) = f_{1}(x_{\sigma(1)},\ldots,x_{\sigma(n)})$$
The $f_{\sigma}$ are pairwise orthogonal and sum up to an element of norm $1$ that equals $1$ on a subset of $[0,1]^n$ of measure larger than $1-\delta$.

Our next step will be to define an embedding of $C([0,1]^n)$ into $\A$. We may assume that the finite set $F$ consists of elementary tensors. Denote $F=\{z_1^{(i)}\otimes\cdots\otimes z_n^{(i)}\}_{i\in I}$ where $I$ is some finite index set. For $k=1,\ldots,n$ we take unital embeddings $\psi_k:C([0,1]) \to \Zh$ such that 
$$\tau(\psi_k(g)) = \int_0^1 g(t)dt$$ and $\|[z_k^{(i)},\psi_k(g)]\|<\eps^\frac{1}{n}$ for all $i\in I$ and normalized $g\in C([0,1])$. Using the identification $C([0,1]^n)\cong C([0,1])^{\otimes n}$ we define $\psi:C([0,1]^n) \to \A$ by 
$$\psi(g_1\otimes\cdots\otimes g_n)=\psi_1(g_1)\otimes\cdots\otimes\psi_n(g_n)$$
We define $e_\sigma = \psi(f_\sigma)$. Clearly we have that $\|[e_\sigma,y]\|<\eps$ for all $y\in F$.
It is also not hard to see that $$\tau(\psi(f)) = \int_{[0,1]^n} f d\lambda$$ for any $f\in C([0,1]^n)$ which implies that for the elements $e_\sigma = \psi(f_\sigma)$ we have $d_\tau(1-\sum_{\sigma\in G} e_\sigma) < \delta$. Since $\A$ has strict comparison and $\tau$ is the only trace on $\A$ this entails $1-\sum_{\sigma\in G} e_\sigma \precsim a$.

\end{Exl}

\section{Actions of $\Z$}
 \label{section:single-auto}

\begin{Def}\label{def:tracialrokhlinforautomorphisms}
Let $\alpha$ be an automorphism of a simple, unital \Csalg~ $\A$. We say that $\alpha$ has the \emph{generalized tracial Rokhlin property} if for any finite set $F\subseteq\A$, any $\eps>0$, and $k\in\N$, and any non-zero positive element $a\in\A$ there exist normalized orthogonal positive contractions $e_1,\ldots,e_k\in \A$ such that the following holds:
\begin{enumerate}
\item $1-\sum_{i=1}^k e_i \precsim a$.
\item $\|[e_i,y]\|<\eps$ for all $i$ and for all $y\in F$
\item $\|\alpha(e_i)-e_{i+1}\|<\eps$ for all $1\leq i\leq n-1$.
\end{enumerate} 
\end{Def}
Note that in the definition we do not require that $\alpha(e_k)$ be close to $e_1$.
\begin{Notation}
Elements $e_1,\ldots,e_k$ as in Definition \ref{def:tracialrokhlinforautomorphisms} are said to satisfy the relations $\Rr(k,\eps,a,F)$.
\end{Notation}

We begin as we did in the previous section with some basic properties.

\begin{Prop}
Let $\A$ be a simple, unital \Csalg~ and let $\alpha\in \aut(\A)$ have the generalized tracial Rokhlin property. Then $\alpha^m$ is outer for all $m\in \Z\setminus\{0\}$. 
\end{Prop}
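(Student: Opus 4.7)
The argument should mirror closely the proof given earlier for finite group actions. Assume for contradiction that $\alpha^m$ is inner for some non-zero $m$; since $(\ad u)^{-1} = \ad u^*$, we may assume $m > 0$, and write $\alpha^m = \ad u$ for a unitary $u \in \A$. The goal is to apply the Rokhlin property to a tower of length greater than $m$, and then detect the inner implementation of $\alpha^m$ as both a translation along the tower and as (approximately) the identity modulo the unitary $u$.

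Concretely, I would apply Definition~\ref{def:tracialrokhlinforautomorphisms} with $k = m+1$, $F = \{u\}$, $a = 1_\A$ (so the Cuntz subequivalence condition (1) is automatic), and a tolerance $\eps$ to be fixed below. This produces normalized orthogonal positive contractions $e_1,\ldots,e_{m+1} \in \A$ satisfying $\|\alpha(e_i)-e_{i+1}\| < \eps$ for $1 \leq i \leq m$ and $\|[u,e_1]\| < \eps$. Iterating the shift estimate and using that $\alpha$ is an isometry, one obtains $\|\alpha^m(e_1)-e_{m+1}\| < m\eps$. On the other hand, $\alpha^m(e_1) = ue_1u^*$, and the standard commutator bound $\|ue_1u^* - e_1\| = \|(ue_1 - e_1u)u^*\| \leq \|[u,e_1]\|$ gives $\|\alpha^m(e_1)-e_1\| < \eps$. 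Combining these, $\|e_1 - e_{m+1}\| < (m+1)\eps$.

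To close the argument, observe that orthogonal positive contractions of norm $1$ are at distance exactly $1$: since $e_1 e_{m+1} = 0$ they commute, and in the commutative $C^*$-subalgebra they generate they correspond to continuous functions with disjoint support each attaining the value $1$, whence $\|e_1 - e_{m+1}\| = 1$. Choosing $\eps < 1/(m+1)$ at the outset therefore yields a contradiction, completing the proof.

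I do not anticipate a real obstacle here. The only points to be careful about are (i) having a tower strictly longer than $m$ so that $e_{m+1}$ actually exists and is orthogonal to $e_1$, (ii) exploiting that $\alpha$ is isometric when telescoping the tower estimate to $\alpha^m$, and (iii) the elementary observation about orthogonal norm-$1$ positive contractions, which is the mechanism converting a quantitative smallness of $\|e_1 - e_{m+1}\|$ into a genuine contradiction.
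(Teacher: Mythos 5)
Your proposal is correct and is essentially the paper's own argument: the paper likewise takes a Rokhlin tower of length $m+1$ with $F=\{u\}$, $a=1_\A$ and $\eps=\frac{1}{2m+2}$, telescopes to bound $\|\alpha^m(e_1)-e_{m+1}\|$ by $m\eps$, uses the commutator estimate with $u$, and contradicts $\|e_1-e_{m+1}\|=1$ for orthogonal normalized positive contractions. Your explicit reduction to $m>0$ and the justification of the distance-one fact are harmless elaborations of the same proof.
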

\begin{proof}
Suppose $\alpha^m$ is implemented by some unitary $u\in \A$. Let $e_1, \ldots, e_{m+1}\in\A$ be elements satisfying $\Rr(m+1, \frac{1}{2m+2}, 1_\A, \{u\})$. We have
\begin{align*}
1 &= \|e_1-e_{m+1}\| \\
&< \|ue_1u^* - e_{m+1}\| + \frac{1}{2m+2} \\
&= \|\alpha^m(e_1)- e_{m+1}\| + \frac{1}{2m+2} \\
&< \frac{1}{2} + \frac{1}{2m+2} \\
&< 1
\end{align*}
\end{proof}

\begin{Cor}
Let $\alpha \in \aut(\A)$ be an automorphism on a simple, unital \Csalg~ $\A$. If $\alpha$ has the generalized tracial Rokhlin property then $\A\rtimes_\alpha \Z$ is simple.
\end{Cor}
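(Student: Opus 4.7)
The plan is essentially to invoke the outerness criterion of Kishimoto that was already cited (in the form attributed to Theorem 3.1 of \cite{kishimoto}) immediately before Lemma \ref{lemma:approximatelyinvarianttza}: if $\beta:G\to\aut(\A)$ is an action of a discrete group on a simple \Csalg~ such that $\beta_g$ is outer for every $g\ne 1$, then $\A\rtimes_{\beta,r}G$ is simple. Here we take $G=\Z$ and $\beta=\alpha$.

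The verification of the outerness hypothesis is exactly the content of the preceding proposition, which shows that under the generalized tracial Rokhlin property every non-trivial power $\alpha^m$ is outer. Thus the hypothesis of Kishimoto's theorem is satisfied for the $\Z$-action generated by $\alpha$, and we conclude that $\A\rtimes_{\alpha,r}\Z$ is simple. Since $\Z$ is amenable, the reduced and full crossed products coincide, and hence $\A\rtimes_\alpha\Z$ is simple.

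There is no real obstacle: the work has already been done in the previous proposition; the corollary is a one-line combination of that proposition with Kishimoto's theorem and the amenability of $\Z$.
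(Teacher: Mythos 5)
Your proof is correct and follows essentially the same route as the paper: the preceding proposition gives outerness of all non-trivial powers of $\alpha$, and Theorem 3.1 of \cite{kishimoto} then yields simplicity of the crossed product. Your additional remark that amenability of $\Z$ identifies the reduced and full crossed products is a harmless (and welcome) clarification that the paper leaves implicit.
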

\begin{proof}
Immediate from Theorem 3.1 of \cite{kishimoto}.
\end{proof}

\begin{Lemma}\label{lemma:smallrokhlinelements}
Let $\A$ be a simple, separable, unital, \Csalg~ and let $\alpha \in \aut(\A)$ be an automorphism with the generalized tracial Rokhlin property. Recall that $\alpha$ acts naturally on $T(\A)$ via $\tau \mapsto \tau\circ\alpha$. Assume that there exists $m\in\N$ such that the action of $\alpha^m$ on $T(\A)$ is trivial. Then for any $c>0$ there exist $k_0\in\N$, such that whenever $k\geq k_0$, $0< \eps < \frac{1}{k}$, $b\in \A_+$ satisfies $k\langle b\rangle\leq \langle 1\rangle$,  $F\subseteq \A$ is some finite subset and $e_1,\ldots,e_k\in \A$ are elements satisfying the relations $\Rr(k,\eps,b,F)$ then $d_\tau(e_i)<c$ for all $\tau\in T(\A)$ and for all $i$.
\end{Lemma}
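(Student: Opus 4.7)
The plan is to fix a trace $\tau\in T(\A)$ and an index $i_0\in\{1,\ldots,k\}$, then bound $d_\tau(e_{i_0})$ by splitting the spectrum of $e_{i_0}$ at the threshold $k\eps\in(0,1)$. The bulk part, corresponding to spectrum above $k\eps$, will be controlled by an averaging argument along the residue class $\{j\equiv i_0\pmod m\}$, using the hypothesis $\tau\circ\alpha^m=\tau$. The tail part, corresponding to spectrum in $(0,k\eps]$, will be controlled by the orthogonality of the $e_j$ together with $1-\sum_j e_j\precsim b$ and the estimate $d_\tau(b)\leq 1/k$, which follows from $k\langle b\rangle\leq\langle 1\rangle$.

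For the bulk, iterating $\|\alpha(e_i)-e_{i+1}\|<\eps$ gives $\|\alpha^j(e_i)-e_{i+j}\|<j\eps$, and Rordam's lemma (Proposition 2.2 of \cite{rordamuhf2}) then yields $(e_{i+j}-j\eps)_+\precsim\alpha^j(e_i)$ and, going backwards, $(e_i-j\eps)_+\precsim\alpha^{-j}(e_{i+j})$. Specializing $j=m\ell$ and using $\tau\circ\alpha^{\pm m}=\tau$, this gives $d_\tau((e_{i_0}-m\ell\eps)_+)\leq d_\tau(e_j)$ for every $j=i_0\pm m\ell\in\{1,\ldots,k\}$. Because $\eps<1/k$ forces $m\ell\eps<k\eps<1$, monotonicity of $(e_{i_0}-\lambda)_+$ in $\lambda$ lets us replace $m\ell\eps$ by $k\eps$: for every $j\in J:=\{1\leq j\leq k:j\equiv i_0\pmod m\}$ one has $d_\tau((e_{i_0}-k\eps)_+)\leq d_\tau(e_j)$. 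Summing and using orthogonality of the $e_j$ yields $|J|\cdot d_\tau((e_{i_0}-k\eps)_+)\leq d_\tau\bigl(\sum_{j\in J}e_j\bigr)\leq 1$, so $d_\tau((e_{i_0}-k\eps)_+)\leq 1/|J|\leq 2m/k$ for $k\geq 2m$.

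For the tail, set $E=\sum_{j=1}^k e_j$ and let $p=\chi_{(0,k\eps]}(e_{i_0})\in\A^{**}$. Orthogonality of the $e_j$ gives $pe_j=0$ for $j\neq i_0$, hence $p(1-E)p=p(1-e_{i_0})p\geq(1-k\eps)p$, using $e_{i_0}\leq k\eps$ on the range of $p$. Therefore $p\precsim p(1-E)p\precsim 1-E\precsim b$, so $\tau(p)\leq d_\tau(b)\leq 1/k$. Combining with the bulk estimate, $d_\tau(e_{i_0})=d_\tau((e_{i_0}-k\eps)_+)+\tau(p)\leq(2m+1)/k$, uniformly in $\tau$ and $i_0$. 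Taking $k_0=\max\bigl(2m,\lceil(2m+1)/c\rceil+1\bigr)$ completes the argument. The delicate point is this tail estimate: the Rokhlin-tower averaging by itself only bounds the truncated dimension $d_\tau((e_{i_0}-k\eps)_+)$, and a priori $e_{i_0}$ could carry significant spectral mass in $(0,k\eps]$; the spectral-projection argument in $\A^{**}$ is what couples this missing mass to the Cuntz-small element $1-\sum_j e_j$ via the domination hypothesis on $b$.
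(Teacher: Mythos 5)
Your proposal is correct and follows essentially the same strategy as the paper: the bound $d_\tau\big((e_{i_0}-k\eps)_+\big)\leq 2m/k$ via iterated $\alpha^m$-shifts, Rørdam's perturbation lemma, $\alpha^{\pm m}$-invariance of traces, and averaging over the residue class of $i_0$ modulo $m$ (using pairwise orthogonality of the $e_j$) is exactly the paper's argument. The only divergence is the tail: the paper stays inside $\A$, writing $e_i=f(e_i)+g(e_i)$ with explicit $f,g,h\in C_0((0,1])$ and exhibiting $f(e_i)=h(e_i)^{1/2}\big(1-\sum_j e_j\big)h(e_i)^{1/2}\precsim b$, whereas you pass to the spectral projection $p=\chi_{(0,k\eps]}(e_{i_0})$ in $\A^{**}$ and use the normal extension of $\tau$ together with $p\precsim 1-\sum_j e_j\precsim b$; this is equally valid (the facts you need --- that the extension is a normal trace, that $d_\tau$ is the trace of the support projection, and that it respects Cuntz subequivalence in $\A^{**}$ --- are standard), and the paper's functional-calculus version is just a way of running the same estimate without leaving the C*-algebra.
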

\begin{proof}
Choose $k_0$ such that $\frac{2m}{k_0}<c$. Let $k$, $\eps>0$, $b\in \A_+$ be as in the statement of the lemma and let $e_1,\ldots,e_k$ be elements satisfying $\Rr(k,\eps,b,F)$. We partition the set $I = \{1,\ldots,k\}$ into $m$ sets
$$I = \bigsqcup_{j=1}^m I_j$$
where $I_j = I\cap (j+m\Z)$. We have that
$$\|\alpha^m(e_i) - e_{i+m}\|<m\eps$$
which implies that 
$$(\alpha^m(e_i) - m\eps)_+ = \alpha^m((e_i-m\eps)_+) \precsim e_{i+m}$$
whenever $1\leq i \leq k-m$. Since $\alpha^m$ acts trivially on $T(\A)$, this entails that 
$$d_\tau((e_i-m\eps)_+) \leq d_\tau(e_{i+m})$$
for all $1\leq i \leq k-m$. Similarly we have that
$$d_\tau((e_i-m\eps)_+) \leq d_\tau(e_{i-m})$$
whenever $m+1 \leq i \leq k$. By iterating this argument we get that
$$d_\tau((e_i- k\eps)_+) \leq d_\tau(e_{i^\prime})$$
for all $1\leq j \leq m$ and for all $i,i^\prime\in I_j$. Note that $|I_j|\leq  \left\lceil \frac{k}{m} \right\rceil$ for all $j$. This implies that, if $i \in I_j$ we have
$$d_\tau((e_i-k\eps)_+) \leq \frac{1}{|I_j|}\sum_{i^\prime \in I_j} d_\tau(e_{i^\prime}) \leq \frac{m}{k}d_\tau(1) < \frac{c}{2}.$$
Denote $\eta=k\eps$ and let $g,f\in C_0((0,1])$ be defined by
\begin{align*}
g(t) &= \begin{cases} 0, & t\leq\eta \\ \frac{1}{1-\eta}(t-\eta), & t>\eta \end{cases}, &f(t)=t-g(t)=\begin{cases} t, & t\leq\eta \\ \frac{\eta}{1-\eta}(1-t), & t>\eta \end{cases}.
\end{align*}
Notice that $g(x)\sim (x-\eta)_+$ for all $x\in\A_+$. Together with the previous observations this implies that $$d_\tau(g(e_i))<\frac{c}{2}$$ for all $i$ and for all $\tau\in T(\A)$. Now define another function $h\in C_0((0,1])$ by
$$h(t)= \begin{cases} \frac{t}{1-t}, & t\leq\eta \\ \frac{\eta}{1-\eta}, & t>\eta \end{cases}.$$
We have that
$$h(t)^\frac{1}{2}(1-t)h(t)^\frac{1}{2} = h(t)(1-t) = f(t),$$
thus
$$h(e_i)^\frac{1}{2} (1-e_i) h(e_i)^\frac{1}{2} = f(e_i)$$
for all $i$. Since $e_j$ is orthogonal to $e_i$ for $i\neq j$, this implies that
$$h(e_i)^\frac{1}{2}\left(1 - \sum_{j=1}^k e_j\right)h(e_i)^\frac{1}{2} = f(e_i)$$
entailing that
$$f(e_i)\precsim 1 - \sum_{j=1}^k e_j \precsim b$$
for all $i$.
Finally we have that 
\begin{align*}
e_i &= f(e_i)+g(e_i) \\ 
& \precsim f(e_i) \oplus g(e_i) \\ 
& \precsim b \oplus g(e_i)
\end{align*}
In particular we have $$d_\tau(e_i) \leq d_\tau(b) + d_\tau(g(e_i)) < c$$
\end{proof}

\begin{Lemma}\label{lemma:approximatelyzinvarianttzaforaiautomorphisms}
Let $\A$ be a simple, separable, unital, \TZA~ \Csalg~ and let $\alpha \in \aut(\A)$ be an automorphism with the generalized tracial Rokhlin property. If $\alpha^m$ acts trivially on $T(\A)$ for some $m$, then for any finite set $F \subset \A$, $\eps>0$ and non-zero positive element $a \in \A$ and $n \in \N$ there is a c.p.c. order zero map $\psi:M_n \to \A$ such that:
\begin{enumerate}
\item $1-\psi(1) \precsim a$. 
\item For any normalized element $x \in M_n$ and any $y\in F$ we have $\|[\psi(x),y]\| < \eps$.
\item For any normalized element $x \in M_n$ we have $\|\alpha(\psi(x))-\psi(x)\| < \eps$.
\end{enumerate}
\end{Lemma}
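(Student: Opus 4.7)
The approach parallels the finite-group proof of Lemma \ref{lemma:approximatelyinvarianttza}, but must overcome a new obstacle: the averaging there used that $h \mapsto gh$ is a bijection of $G$, whereas the analogous sum $\sum_{i=1}^k e_i \bar\alpha^{i-1}(\varphi(x))$ for a Rokhlin tower $(e_1,\ldots,e_k)$ produces a boundary defect $\bar\alpha(e_k)\bar\alpha^k(\varphi(x)) - e_1\varphi(x)$ of norm roughly $1$. The plan is to kill this boundary term using trapezoidal weights; this is precisely where the hypothesis on $\alpha^m$ enters, through Lemma \ref{lemma:smallrokhlinelements}.

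Given $F$, $\eps$, $a$, $n$, I would first fix a slope parameter $L$ of order $1/\eps$ and then, invoking Lemma \ref{lemma:smallrokhlinelements}, choose a Rokhlin length $k$ large enough that any $k$-step tower has $d_\tau(e_i) < d_\tau(a)/(4L)$ uniformly in $\tau$ and $i$; by the generalized tracial Rokhlin property of $\alpha$, select such $e_1,\ldots,e_k\in\A$ satisfying $\|\alpha(e_i)-e_{i+1}\|, \|[e_i,F]\| < \eps/(4k)$ and $1 - \sum_i e_i$ Cuntz-dominated by a small element. Using Lemma \ref{lemma:centralsequencealgebra}, pick a c.p.c. order zero $\varphi:M_n\to \A_\infty\cap\A'$ with $1-\varphi(1) \precsim b$ for a prescribed $b\in\A$ of uniformly small trace. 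Then define, in $\A_\infty$,
\[ \hat\psi(x) = \sum_{i=1}^k c_i\, e_i\, \bar\alpha^{i-1}(\varphi(x)), \qquad c_i := \min(1,\, i/L,\, (k-i+1)/L). \]
Pairwise orthogonality of the $e_i$ and centrality of $\varphi$ imply that $\hat\psi$ is a c.p.c. order zero map, and the centrality estimate $\|[\hat\psi(x),y]\|<\eps/2$ reduces to $\|\sum c_i[e_i,y]\bar\alpha^{i-1}(\varphi(x))\| \leq k\cdot\eps/(4k)$.

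The main computation is approximate $\alpha$-invariance. Reindexing $j=i+1$ in $\bar\alpha(\hat\psi(x)) = \sum_i c_i\bar\alpha(e_i)\bar\alpha^i(\varphi(x))$ and using $\bar\alpha(e_i)\approx e_{i+1}$ yields
\[ \bar\alpha(\hat\psi(x)) - \hat\psi(x) \approx -c_1 e_1 \varphi(x) + c_k \bar\alpha(e_k)\bar\alpha^k(\varphi(x)) + \sum_{j=2}^k (c_{j-1}-c_j)\, e_j\, \bar\alpha^{j-1}(\varphi(x)), \]
whose norm, by orthogonality of the $e_j$, is bounded by $c_1 + c_k + \max_j|c_{j-1}-c_j| \leq 3/L < \eps/2$, plus a pre-controlled Rokhlin error. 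The trapezoidal shape is exactly what makes this small; uniform weights would leave defect of norm $1$, and this is the hard part of the proof. For the Cuntz bound on $1-\hat\psi(1)$, decompose
\[ 1-\hat\psi(1) = \Big(1-\sum_i e_i\Big) + \sum_i(1-c_i)e_i + \sum_i c_i e_i\big(1-\bar\alpha^{i-1}(\varphi(1))\big); \]
the first summand is small by choice of tower, the second is supported on only the $2L$ boundary indices with total trace $< d_\tau(a)/2$, and the third is controlled analogously using that $d_\tau(b)$ is uniformly small. Strict comparison (Theorem \ref{thm:almostunperf}) then gives $1-\hat\psi(1)\precsim a$ in $\A_\infty$.

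To finish, lift $\hat\psi$ to a sequence $\psi_\ell:M_n\to\A$ of c.p.c. order zero maps; for $\ell$ large, obtain (using Proposition 2.4 of \cite{rordamuhf2} applied in $\A_\infty$) a representative with $(1-\psi_\ell(1)-1/4)_+\precsim a$, $\|[\psi_\ell(x),F]\|<\eps/2$, and $\|\alpha(\psi_\ell(x))-\psi_\ell(x)\|<\eps/2$. Exactly as in Lemma \ref{lemma:approximatelyinvarianttza}, set $\psi = h[\psi_\ell]$ for $h(t)=\min(2t,1)$; Lemmas \ref{lemma:orderzerofunctionalcalculus} and \ref{lemma:orderzerofunctionalcalculus2} upgrade the approximate centrality and $\alpha$-invariance to within $\eps$, while $1-\psi(1) = 2(1-\psi_\ell(1)-1/2)_+ \precsim a$ closes the proof.
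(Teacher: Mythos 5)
Your proposal is correct and follows essentially the same route as the paper's proof: damp the boundary of a long Rokhlin tower with trapezoidal weights, use Lemma \ref{lemma:smallrokhlinelements} (where the hypothesis on $\alpha^m$ enters) to make the tracial mass of the reweighted boundary levels small, combine with a central-sequence order zero map from Lemma \ref{lemma:centralsequencealgebra}, get $1-\hat\psi(1)\precsim a$ by strict comparison, and lift and correct with $h[\cdot]$ exactly as in Lemma \ref{lemma:approximatelyinvarianttza}; your reindex-then-exact-orthogonality estimate is just a repackaging of the paper's squared-norm bound for the invariance defect. The only points to nail down are quantitative: fix the tolerances $\eta$ furnished by Lemmas \ref{lemma:orderzerofunctionalcalculus} and \ref{lemma:orderzerofunctionalcalculus2} for the target $\eps$ \emph{before} choosing the tower data (rather than aiming for $\eps/2$), and choose $b$ with $\sup_\tau d_\tau(b)$ small compared to $\inf_\tau d_\tau(a)/k$, since your third summand $\sum_i c_i e_i\bigl(1-\bar\alpha^{i-1}(\varphi(1))\bigr)$ accumulates contributions from all $k$ levels of the tower.
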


\begin{proof}
Let $F,\eps, a, n$ be given as in the statement of the lemma. We assume throughout that $\eps<1$. As in the proof of Lemma \ref{lemma:approximatelyinvarianttza} we can find $c>0$ such that $d_\tau(a)>c$ for all $\tau \in T(\A)$. Let $M\in \N$ be such that $\sqrt\frac{4}{M} < \frac{\eps}{2}$. Denote $c^\prime = \frac{c}{2M+1}$. By Lemma \ref{lemma:smallrokhlinelements} we can find $k_0$ such that whenever $k\geq k_0$, $0 <\delta <\frac{1}{k}$, $b\in \A_+$ with $k\langle b \rangle \leq \langle 1\rangle$,  $K\subseteq \A$ is some finite subset, and $e_1,\ldots,e_k\in \A_+$ are elements satisfying $\Rr(k,\delta,b,K)$ then we have $d_\tau(e_i)<c^\prime$ for all $i$ and for all $\tau \in T(\A)$. We may furthermore assume $k_0>2M$.  Use Lemma \ref{lemma:sc} to find a positive element $b\in \A$ such that $d_\tau(b)<\min\{\frac{1}{k},\frac{c}{2M}\}$ for all $\tau\in T(\A)$. Choose $k\geq k_0$ and choose $\delta>0$ such that $\left(2k^2\delta + k\delta^2 + \frac{4}{M}\right)^{\frac{1}{2}} < \frac{\eps}{2}$. In particular $\delta<\frac{1}{k}$.Let $K=\{\alpha^{-i}(y)\mid y\in F, 1\leq i\leq k\}$

Let $e_1,\ldots,e_k \in \A$ be elements satisfying $\Rr(k,\delta,b,K)$. We now modify the elements $e_i$ to obtain new elements $f_i$ as follows:
$$f_i = \begin{cases}e_i, & M\leq i \leq k-M\\ \frac{i}{M} e_i, & 1\leq i< M \\ \frac{k-i}{M} e_i, & k-M < i\leq k  \end{cases}$$
By Lemma \ref{lemma:centralsequencealgebra} we can find a c.p.c. order zero map $\varphi:M_n\to \A_\infty \cap \A^\prime$ such that $1-\varphi(1)\precsim b$. We define $\hat\psi:M_n\to \A_\infty$ by
$$\hat\psi(x) = \sum_{i=1}^k f_i\bar\alpha^i(\varphi(x)).$$
$\hat\psi$ is clearly a c.p.c. order zero map. Our first goal is now to find a nice bound on $\|\hat\psi(x)-\bar\alpha(\hat\psi(x))\|$ for normalized $x\in M_n$. First, note that $\|\alpha(f_i)-f_{i+1}\|<\frac{2}{M}$ for all $i<k$ and also $\|f_1\|,\|f_k\|<\frac{1}{M}$. Now assume $i\neq j$, we have
\begin{align*}
\lnorm(\alpha(f_i)-f_{i+1})(\alpha(f_j)-f_{j+1})\rnorm & = \lnorm-\alpha(f_i)f_{j+1}-f_{i+1}\alpha(f_j)\rnorm \\
& \leq \|\alpha(f_i)f_{j+1}\| + \|f_{i+1}\alpha(f_j)\| \\
& \leq \|\alpha(e_i)e_{j+1}\| + \|e_{i+1}\alpha(e_j)\| \\
& \leq \|e_{i+1}e_{j+1}\| + \|e_{i+1}e_{j+1}\| + 2\delta \\
& = 2\delta.
\end{align*}
We use this estimate to obtain that for normalized $x\in M_n$
\begin{IEEEeqnarray*}{rCl}
\IEEEeqnarraymulticol{3}{l}{
\lnorm\bar\alpha\left(\sum_{i=1}^{k-1}f_{i}\bar\alpha^{i}(\varphi(x))\right)-\sum_{i=1}^{k-1}f_{i+1}\bar\alpha^{i+1}(\varphi(x))\rnorm^2
}\\ \quad\quad\quad\quad\quad\quad\quad\quad\quad\quad
& = & \lnorm\sum_{i=1}^{k-1}(\alpha(f_i)-f_{i+1})\bar\alpha^{i+1}(\varphi(x))\rnorm^2 \\
& \leq & \lnorm \sum_{\substack{1\leq i,j \leq k-1 \\ i\neq j}}(\alpha(f_i)-f_{i+1})(\alpha(f_j)-f_{j+1})\bar\alpha^{i+1}(\varphi(x))\bar\alpha^{j+1}(\varphi(x))\rnorm \\
&&+ \lnorm\sum_{i=1}^{k-1} (\alpha(f_i)-f_{i+1})^2(\bar\alpha^{i+1}(\varphi(x)))^2\rnorm \\
& \leq & 2k^2\delta + k\delta^2 + \frac{4}{M}.
\end{IEEEeqnarray*}
Now we are ready to consider the expression we are interested in. Let $x\in M_n$ be a normalized element. We have
\begin{align*}
\|\bar\alpha(\hat\psi(x)) - \hat\psi(x)\| &\leq \lnorm\bar\alpha\left(\sum_{i=1}^{k-1}f_{i}\bar\alpha^{i}(\varphi(x))\right)-\sum_{i=1}^{k-1}f_{i+1}\bar\alpha^{i+1}(\varphi(x))\rnorm \\
&\quad+ \lnorm f_1\bar\alpha(\varphi(1))\rnorm + \lnorm\bar\alpha(f_k\bar\alpha^k(\varphi(1)))\rnorm \\
& \leq \left(2k^2\delta + k\delta^2 + \frac{4}{M}\right)^{1/2} + \frac{2}{M} \\
& \leq \left(2k^2\delta + k\delta^2 + \frac{4}{M}\right)^{1/2} + \frac{2}{M} \\
& < \eps.
\end{align*}

Next, we claim that $1-\hat\psi(1)\precsim a$. To see this, first note that
\begin{align*}
\left(1-\sum_{i=1}^k f_i\right) &\precsim 1-\sum_{i=M}^{k-M} e_i\\
& \precsim b^{\oplus 2M-1}
\end{align*}
Using this, we may write
\begin{align*}
1-\hat\psi(1) &= 1-\varphi(1) +\varphi(1)\left(1-\sum_{i=1}^k f_i\right)\\
&\precsim 1-\varphi(1) \oplus \left(1-\sum_{i=1}^k f_i\right)\\
&\precsim b^{\oplus{2M}}
\end{align*}
Since $\A$ has strict comparison and $d_\tau(b)< \frac{c}{2M}$ for all $\tau \in T(\A)$, we have that $b^{\oplus{2M}}\precsim a$ which proves our claim.

The final condition we need on $\hat\psi$ is that $\|[\hat\psi(x),y]\|<\eps$ for all normalized $x\in M_n$ and for all $y\in K$. This follows immediately from our construction.

We now continue as in the proof of Lemma \ref{lemma:approximatelyinvarianttza} and use projectivity of order zero maps to lift $\hat\psi$ to a sequence of c.p.c. order zero maps from $M_n$ into $\A$. Going far enough along this sequence we obtain the map $\psi$ which we need.
\end{proof}

Combining this with Theorem \ref{thm:tzaimpliesza}, we obtain the following.

\begin{Thm}\label{thm:zaction}
Let $\A$ be a simple, separable, unital, \TZA~ \Csalg~ and let $\alpha \in \aut(\A)$ be an automorphism with the generalized tracial Rokhlin property. If $\alpha^m$ acts trivially on $T(\A)$ for some $m$ then $\A\rtimes_{\alpha} \Z$ is also \TZA.
\end{Thm}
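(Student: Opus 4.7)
The plan is to mirror the proof of Theorem \ref{thm:finitegroupaction}: essentially all of the hard work has already been done in Lemma \ref{lemma:approximatelyzinvarianttzaforaiautomorphisms}, which produces approximately $\alpha$-invariant, approximately central order zero maps $M_n \to \A$ whose unit defect is Cuntz-dominated by any prescribed nonzero positive element of $\A$. It remains to push these maps up to witnesses of tracial $\Zh$-absorption for $\A\rtimes_\alpha\Z$.

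Given a finite set $F\subseteq \A\rtimes_\alpha\Z$, an $\eps>0$, a nonzero positive element $a\in\A\rtimes_\alpha\Z$, and $n\in\N$, the first reduction is to replace $a$ by a nonzero positive element of $\A$ that it Cuntz-dominates, via Lemma \ref{lemma:crossedproductcuntzdomination}. (Note that the hypotheses of that lemma are in force because elements of the form $\alpha^k$ with $k\neq 0$ are outer, as follows from the generalized tracial Rokhlin property.) So without loss of generality we may assume $a\in\A_+$.

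The second reduction is to rewrite the commutator condition in terms of elements of $\A$ and the canonical unitary $u$ implementing $\alpha$. Let $u$ be this unitary, so that $uxu^* = \alpha(x)$ for $x\in\A$. Each element of $F$ can be approximated in norm by a finite sum $\sum_{k=-N}^N b_k u^k$ with $b_k\in\A$, and a standard $3\eps$ argument shows that if a contraction $c$ approximately commutes with all such $b_k$ and with $u$, it approximately commutes with the original elements of $F$. Thus, choosing a suitable finite set $F'\subseteq\A$ of coefficients and a sufficiently small $\eps'>0$, it suffices to produce a c.p.c.\ order zero map $\psi:M_n\to\A\subseteq\A\rtimes_\alpha\Z$ such that $1-\psi(1)\precsim a$, $\|[\psi(x),y]\|<\eps'$ for all normalized $x\in M_n$ and $y\in F'$, and $\|[\psi(x),u]\|<\eps'$ for all normalized $x$. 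The last condition is exactly the condition $\|\alpha^{-1}(\psi(x))-\psi(x)\|<\eps'$, since $u^*\psi(x)u = \alpha^{-1}(\psi(x))$. Such a $\psi$ is produced directly by Lemma \ref{lemma:approximatelyzinvarianttzaforaiautomorphisms} applied to $F'$, $\eps'$, $a$, and $n$ (and replacing $\alpha$ by $\alpha^{-1}$, which also has the generalized tracial Rokhlin property with the same power acting trivially on traces).

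There is no real obstacle here beyond unpacking: the hypothesis that some power of $\alpha$ acts trivially on $T(\A)$ has already been consumed in the proof of Lemma \ref{lemma:approximatelyzinvarianttzaforaiautomorphisms} (to control tracial sizes of the Rokhlin elements via Lemma \ref{lemma:smallrokhlinelements}), and the Cuntz-domination step in the crossed product is handled by Lemma \ref{lemma:crossedproductcuntzdomination}. Together these yield that the conditions of Definition \ref{def:TZA} hold for $\A\rtimes_\alpha\Z$, so the crossed product is tracially $\Zh$-absorbing.
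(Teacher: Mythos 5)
Your proposal is correct and takes essentially the same route as the paper, which proves Theorem \ref{thm:zaction} exactly as it proves Theorem \ref{thm:finitegroupaction}: Lemma \ref{lemma:approximatelyzinvarianttzaforaiautomorphisms} supplies the approximately central, approximately $\alpha$-invariant order zero maps (this is where the hypothesis on $T(\A)$ is used), and Lemma \ref{lemma:crossedproductcuntzdomination} reduces the positive element to one in $\A$; your explicit reduction of commutators to finite sums $\sum b_k u^k$ is just the step the paper leaves implicit. (Your detour through $\alpha^{-1}$ is harmless but unnecessary, since $\|[\psi(x),u]\| = \|\alpha(\psi(x))-\psi(x)\|$ directly.)
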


\begin{Rmk}
The hypothesis that $\alpha^m$ acts trivially on $T(\A)$ for some $m$ is satisfied automatically in many cases, for instance if some power of $\alpha$ is approximately inner, or if $T(\A)$ has finitely many extreme points. We suspect that this condition can be relaxed. 
\end{Rmk}

\begin{Exl}
It is shown in \cite{sato} that the bilateral tensor shift automorphism $\alpha$ on $\Zh^{\otimes \infty}\cong \Zh$ satisfies what is there defined as the weak Rokhlin property. Since $\Zh$ has strict comparison, this implies that $\alpha$ has the generalized tracial Rokhlin property. Since $\Zh$ has unique trace this shows that $\alpha$ satisfies the conditions of Theorem \ref{thm:zaction}.
\end{Exl}


\begin{thebibliography}{HRW}


\bibitem{Archey}
D.~E. Archey.
\newblock {\em Crossed product {$C^*$}-algebras by finite group actions with a
  generalized tracial {R}okhlin property}.
\newblock PhD thesis, University of {O}regon, 2008.



\bibitem{hirshberg-winter}  I. Hirshberg and W. Winter, Rokhlin actions and self-absorbing $C^*$-algebras. \emph{Pacific J. Math.} 233 (2007), no. 1, 125--143.

\bibitem{hw-permutations} I. Hirshberg, and W. Winter, 
Permutations of strongly self-absorbing $C^*$-algebras.
\emph{Internat. J. Math.} 19 (2008), no. 9, 1137--1145. 

\bibitem{hwz}
I. Hirshberg, W. Winter and J. Zacharias,
\newblock Rokhlin dimension and $C^*$-dynamics, preprint, arXiv:1209.1618v1 (2012).

\bibitem{Iz0}
M. Izumi,
\newblock The {R}ohlin property for automorphisms of {$C^*$}-algebras.
\newblock In {\em Mathematical physics in mathematics and physics ({S}iena,
  2000)}, volume~30 of {\em Fields Inst. Commun.}, pages 191--206. Amer. Math.
  Soc., Providence, RI, 2001.

\bibitem{Iz}
M. Izumi,
\newblock Finite group actions on {$C^*$}-algebras with the {R}ohlin property.
  {I}.
\newblock {\em Duke Math. J.}, 122(2):233--280, 2004.

\bibitem{Iz2}
M. Izumi,
\newblock Finite group actions on {$C^*$}-algebras with the {R}ohlin property.
  {II}.
\newblock {\em Adv. Math.}, 184(1):119--160, 2004.


\bibitem{jiang-su} X. Jiang and H. Su, On a simple unital projectionless $C^*$-algebra. \emph{Amer. J. Math. 121 (1999)}, no. 2, 359-�413.


\bibitem{kirchberg-rordam}  E. Kirchberg and M. R\o rdam, Non-simple purely infinite $C^*$-algebras. \emph{Amer. J. Math.} 122 (2000), no. 3, 637�-666.

\bibitem{kishimoto} A. Kishimoto, Outer automorphisms and reduced crossed products of simple $C^*$-algebras. \emph{Comm. Math. Phys.} 81 (1981), no. 3, 429-�435.

\bibitem{Ks1}
A. Kishimoto,
\newblock The {R}ohlin property for automorphisms of {UHF} algebras.
\newblock {\em J. Reine Angew. Math.}, 465:183--196, 1995.

\bibitem{loring} T. A. Loring, Lifting Solutions to Perturbing Problems in $C^*$-algebras, Fields Institute Monographs \textbf{8}.

\bibitem{lin}  H, Lin, An introduction to the classification of amenable $C^*$-algebras. World Scientific Publishing Co., Inc., River Edge, NJ, 2001.

\bibitem{matui-sato} H. Matui and Y. Sato, Strict comparison and $\Zh$-absorption of nuclear $C^*$-algebras. \emph{Acta Math.}  209 (2012), no. 1, 179--196.


\bibitem{OP-tracial}
H. Osaka and N.~C. Phillips.
\newblock Stable and real rank for crossed products by automorphisms with the
  tracial {R}okhlin property.
\newblock {\em Ergodic Theory Dynam. Systems}, 26(5):1579--1621, 2006.

\bibitem{osaka-phillips}  H. Osaka and N. C. Phillips, Crossed products by finite group actions with the Rokhlin property. \emph{Math. Z.} 270 (2012), no. 1-2, 19-�42.

\bibitem{phillips} N. C. Phillips, The tracial Rokhlin property for actions of finite groups on $C^*$-algebras. \emph{Amer. J. Math.} 133 (2011), no. 3, 581-�636.

\bibitem{rordamuhf2} M. R\o rdam, On the structure of simple $C^*$-algebras tensored with a UHF-algebra. II. \emph{J. Funct. Anal.} 107 (1992), no. 2, 255�-269.

\bibitem{rordam} M. R\o rdam, The stable and the real rank of $\Zh$-absorbing $C^*$-algebras. \emph{Internat. J. Math.} 15 (2004), no. 10, 1065�-1084.

\bibitem{rordam-winter} M. R\o rdam and W. Winter, The Jiang-Su algebra revisited. \emph{J. Reine Angew. Math.} 642 (2010), 129�-155.

\bibitem{sato} Y. Sato, The Rohlin property for automorphisms of the Jiang-Su algebra. \emph{J. Funct. Anal.} 259 (2010), no. 2, 453-�476.

\bibitem{winter1} W. Winter, Covering dimension for nuclear $C^*$-algebras. II. \emph{Trans. Amer. Math. Soc.} 361 (2009), no. 8, 4143�-4167.

\bibitem{winter-dr-Z} W. Winter, Decomposition rank and $\Zh$-stability. \emph{Invent. Math.} 179 (2010), no. 2, 229–301.

\bibitem{winter2} W. Winter, Strongly self-absorbing $C^*$-algebras are $\Zh$-stable. \emph{J. Noncommut. Geom.} 5 (2011), no. 2, 253�-264.

\bibitem{winter3} W. Winter, Nuclear dimension and $\Zh$-stability of pure $C^*$-algebras. \emph{Invent. Math.} 187 (2012), no. 2, 259�-342.

\bibitem{winter-zacharias} W. Winter and J. Zacharias, The nuclear dimension of $C^*$-algebras. \emph{Adv. Math.} 224 (2010), no. 2, 461�-498.

\bibitem{winter-zacharias2} W. Winter, and J. Zacharias, Completely positive maps of order zero. \emph{M{\"u}nster J. Math.} 2 (2009), 311�-324. 

\end{thebibliography}
\end{document}